\documentclass[11pt,a4paper,reqno]{amsart}
 \usepackage{amsaddr}
\usepackage{amscd,amsmath}
\usepackage{amssymb,amsmath,latexsym,color,enumitem,tikz,dsfont,tikz-cd}
\usepackage[all]{xypic}       
\usepackage[varg]{pxfonts}
\usepackage{tabu,adjustbox,booktabs}
\usepackage{graphicx}

\usepackage{hyperref}

\usetikzlibrary{decorations.pathmorphing,shapes}

\usetikzlibrary{decorations.markings}
\tikzset{negated/.style={
        decoration={markings,
            mark= at position 0.5 with {
                \node[transform shape] (tempnode) {$\backslash$};
            }
        },
        postaction={decorate}
    }
}

\newcommand{\donotbreakdash}[1]{#1\nobreakdash-\hspace{0pt}}

 \newcommand{\newdownarrow}{{{\rlap{$\ $}\hbox{$\downarrow$}}}}
 \newcommand{\newuparrow}{{{\rlap{$\ $}\hbox{$\uparrow$}}}}
 \newcommand{\twoheaddownarrow}{{\rlap{\rlap{$\ $}\raise .25ex\hbox{$\downarrow$}}\raise-.25ex\hbox{$\downarrow$}}}
 \newcommand{\twoheaduparrow}{{\rlap{\rlap{$\ $}\raise .25ex\hbox{$\uparrow$}}\raise-.25ex\hbox{$\uparrow$}}}

\newcommand{\set}[1]{\{\,#1\,\}}

\newcommand{\tbigwedge}{\mathop{\textstyle \bigwedge }}
\newcommand{\tbigcap}{\mathop{\textstyle \bigcap }}
\newcommand{\tbigcup}{\mathop{\textstyle \bigcup }}
\newcommand{\tbigoplus}{\mathop{\textstyle \bigoplus }}

\newcommand{\tbigvee}{\mathop{\textstyle \bigvee }}
 \newcommand{\cat}[1]{\ensuremath{\mathsf{#1}}} 
 
\makeatletter
\newcommand*{\@old@slash}{}\let\@old@slash\slash
\def\slash{\relax\ifmmode\delimiter"502F30E\mathopen{}\else\@old@slash\fi}
\makeatother

\def\R{\mathbb{R}}

\def\N{\mathbb{N}}

\def\SS{\mathsf{S}}
\def\CC{\mathsf{C}}
\def\FF{\mathcal{F}}

\def\cl{\mathfrak{c}}
\def\op{\mathfrak{o}}
\def\bl{\mathfrak{b}}
\newtheorem{theorem}{Theorem}[section]
\newtheorem{proposition}[theorem]{Proposition}
\newtheorem{construction}[theorem]{Construction}
\newtheorem{lemma}[theorem]{Lemma}
\newtheorem{corollary}[theorem]{Corollary}

\theoremstyle{definition}
\newtheorem{definition}[theorem]{Definition}
\newtheorem{example}[theorem]{Example}

\theoremstyle{remark}
\newtheorem{remark}[theorem]{Remark}
\newtheorem{remarks}[theorem]{Remarks}

\title[Localic separation and the duality between closedness and fittedness]{Localic separation and the duality between  closedness and fittedness}

\author{Igor Arrieta}
\address{School of Computer Science, \\
University of Birmingham, B15 2TT, Birmingham, UK\\[5mm]
\footnotesize{Dedicated to Professor Jorge Picado on the occasion of his 60th birthday}}
\email{i.arrietatorres@bham.ac.uk}
\keywords{Locale, separation axiom, closure operator, $T_1$-axiom, saturated subspace, fitted sublocale, duality, strong Hausdorff locale, $\FF$-separated locale}
\subjclass[2020]{18F70, 06D22, 54D10, 54B30}
\thanks{The author acknowledges support from the Basque Government (grant IT1483-22 and a postdoctoral fellowship of the Basque Government, grant POS-2022-1-0015).}

\begin{document}

\maketitle

\begin{abstract}
There are a number of localic separation axioms which are roughly analogous to the $T_1$-axiom from classical topology. For instance, besides the well-known subfitness and fitness, there are also Rosick\' y--\v Smarda's $T_1$-locales, totally unordered locales and, more categorically, the recently introduced $\mathcal{F}$-separated locales  (i.e., those with a fitted diagonal) --- a property strictly weaker than fitness.

It has recently been shown that the strong Hausdorff property and $\mathcal{F}$-separatedness are in a certain sense dual to each other. In this paper, we provide further instances of this duality --- e.g., we introduce a new first-order separation property which is to $\mathcal{F}$-separatedness as the Johnstone--Sun-shu-Hao--Paseka--\v Smarda conservative Hausdorff axiom is to the strong Hausdorff property, and which can be of independent interest. Using this, we tie up the loose ends of the theory by establishing all the possible implications between these properties and other $T_1$-type axioms occurring in the literature.  In particular, we show that the strong Hausdorff property does not imply $\mathcal{F}$-separatedness, a question which remained open and shows a remarkable difference with its counterpart in the category of topological spaces.
\end{abstract}

\section{Introduction}

Let $X$ be a $T_0$-space. It is an elementary fact in general topology that any of the following conditions is equivalent to the $T_1$-axiom:

\begin{enumerate}[label=\textup{(\arabic*)},leftmargin=3.0\parindent]
\item\label{T1.1} Every open set is a union of closed subspaces;
\item\label{T1.2}  Every closed set is saturated (i.e., an intersection of open sets);
\item\label{T1.3}  The diagonal is saturated in $X\times X$;
\item\label{T1.4}  The order in the homset $\cat{Top}(Y,X)$ is discrete for any space $Y$ (recall that $\cat{Top}(Y,X)$ is ordered under the pointwise specialization order; equivalently $f\leq g$ iff $f^{-1}(U)\subseteq g^{-1}(U)$ for any open set $U$ of $Y$);
\item\label{T1.5}  Every point is closed;
\item\label{T1.6}  Every point is saturated.
\end{enumerate}

Now, if one  represents conditions \ref{T1.1}--\ref{T1.6} by  a conceptual analogy  in the category of locales, the resulting axioms are no longer equivalent to each other and lead to distinct conditions which are roughly analogous to the $T_1$-axiom for spaces:

\begin{enumerate}[label=\textup{(\arabic*')},leftmargin=3.0\parindent]
\item\label{T1.1b} Every open sublocale of $L$ is a join of closed sublocales;
\item\label{T1.2b}  Every closed sublocale of $L$ is fitted (i.e., an intersection of open sublocales);
\item\label{T1.3b}  The diagonal is fitted in $L\oplus L$;
\item\label{T1.4b}  The order in $\cat{Frm}(L,M)$ is discrete;
\item\label{T1.5b}  Every one-point sublocale of $L$ is closed;
\item\label{T1.6b}  Every one-point sublocale of $L$ is fitted.
\end{enumerate}

Conditions \ref{T1.1b} and \ref{T1.2b} are the important and well-known \emph{subfitness} and \emph{fitness}, respectively; whereas condition \ref{T1.4b} amounts to Isbell's \emph{unorderedness} \cite{isbellTU} (also called \emph{total unorderedness} in Johnstone, see \cite{STONE}). Conditions \ref{T1.5b} and \ref{T1.6b}  are very point dependent and thus unlikely   to be of much use in point-free topology (but it is important to point out that locales satisfying \ref{T1.5b} were introduced as \emph{$T_1$-locales} by Rosick\' y
 and \v Smarda \cite{smarda}, and have some desirable properties categorically; also, both are implied by several other genuinely point-free assumptions).  Finally, we studied condition \ref{T1.3b} recently under the name \emph{$\FF$-separatedness} in \cite{APP21}, where it was shown that it is in a strong structural parallel with Isbell's strong Hausdorff property (see also \cite{ArrietaThesis} for details, or Subsection~\ref{diagonal.sep} below for an overview).

 However, in view of the fact that the Hausdorff axiom for spaces implies the $T_1$-property; it is a natural question whether it is the case that the strong Hausdorff property also implies $\FF$-separatedness for locales; this was left as an open question when writing \cite{APP21} and it is the main goal of this paper to provide a negative answer;  a consequence of this is that the only implications that hold between properties  \ref{T1.1b}---\ref{T1.6b} are the known ones. We also note that this strenghtens the idea (cf. \cite{APP21}) that the strong Hausdorff property and $\FF$-separatedness are dual to each other: they are not comparable but there are a number of parallel pairs of results for each one; thus showing a remarkable difference with the category  of topological spaces. In this parallel,  total unorderedness seems to play a symmetric role with respect to $\FF$-separatedness  and the strong Hausdorff property  (cf. Figure~\ref{figtotal}).
 
 In this context, we note that such a counterexample yields \emph{a fortiori} an example of a strongly Hausdorff locale which is not fit, but these are already rather hard to find --- with regard to the historical background, Isbell first constructed such an example in \cite[2.3~(4')]{atomless};  it was later generalized by Banaschewski  to the class of simple extensions of regular spaces (under some mild conditions) in \cite{singly}. However, all such examples are $\mathcal{F}$-separated as we proved in \cite{APP21}; hence they do not serve for our purpose herein.
 
In this paper, building on a construction due to Juhász et al. \cite{juhasz} we construct strongly Hausdorff locales which fail to be $\mathcal{F}$-separated; in particular providing new examples of  strongly Hausdorff locales  which are not fit.
The main tool for this purpose is a new first-order separation axiom for locales ---called property \ref{proper.F} in what follows---  introduced by the author in his PhD thesis \cite{ArrietaThesis}. We show that property \ref{proper.F} is the dual counterpart of the conservative Hausdorff axiom due to Johnstone--Sun-shu-Hao--Paseka--\v Smarda (cf. \cite{johnstoneHao,paseka,separation}).

We then put  these new separation axioms into perspective by comparing and relating them with  \ref{T1.1b}--\ref{T1.6b} above, and also with other  similar properties which appear in the literature --- notably weak subfitness and prefitness  \cite{subfit2015}.

\section{Preliminaries} 
Our notation and terminology regarding the categories of frames and locales will be that of \cite{PP12}; and for topics related to localic separation we refer to \cite{separation}. The Heyting operator in a frame $L$, right adjoint to the meet operator, will be denoted by $\to$; for each $a\in L$, $a^*=a\to 0$ is the pseudocomplement of $a$. 

\subsection{Some Heyting rules}
For the reader's convenience, we list here some of the properties satisfied by the Heyting operator in a frame $L$. For any $a,b,c\in L$, the following hold:
\begin{enumerate}[label=\textup{(H\arabic*)},leftmargin=2.0\parindent]
\item \label{H1} $1\to a=a$\textup;
\item \label{H2} $a\leq b$ if and only if $a\to b=1$\textup;
\item \label{H3} $a\leq b\to a$\textup;
\item \label{H4} $a\to b=a\to (a\wedge b)$\textup;
\item \label{H5} $a\wedge (a\to b)=a\wedge b$\textup;
\item \label{H6} $a\wedge b=a\wedge c$ if and only if $a\to b=a\to c$\textup;
\item \label{H7} $(a\wedge b)\to c=a\to (b\to c)= b\to (a\to c)$\textup;
\item \label{H8} $a=(a\vee b)\wedge (b\to a)$\textup;
\item \label{H9} $a\leq (a\to b)\to b$\textup;
\item \label{H10} $((a\to b)\to b)\to b=a\to b$\textup.
\end{enumerate}

\subsection{Sublocales}\label{subsect.sublocales}

A \emph{sublocale} of a locale $L$ is a subset $S\subseteq L$ closed under arbitrary meets such that
\[\forall a\in L,\ \ \ \forall s\in S,\  \ \ a\to s\in S.\]
These are precisely the subsets of $L$ for which the embedding $j_S\colon S\hookrightarrow L$ is a morphism of locales. Sublocales of $L$ are in one-to-one correspondence with the regular subobjects (equivalently, extremal subobjects) of $L$ in \cat{Loc}. If $\nu_S$ denotes the associated frame surjection, then for any $a\in L$ and $s\in S$ one has
\begin{equation}\label{LocM}\tag{LM}
\nu_S(a)\to s=a\to s.\end{equation}

The system $\SS  (L)$ of all sublocales of $L$, partially ordered by inclusion, is a coframe \cite[Theorem~III.3.2.1]{PP12}, that is, its dual lattice is a frame.  Infima and suprema are given by
\[
\tbigwedge_{i\in I}S_i=\tbigcap_{i\in I}S_i, \quad \tbigvee_{i\in I}S_i=\set{\tbigwedge M\mid M\subseteq\tbigcup_{i\in I} S_i}.
\]
The least element is the sublocale $\mathsf{O}=\{1\}$ and the greatest element is the entire locale $L$. For any $a\in L$, the sublocales
\[
\mathfrak{c}_L(a)=\newuparrow  a=\set{b\in L\mid b\ge a}\ \text{ and }\ \mathfrak{o}_L(a)=\set{a\to b\mid b\in L}
\]
are the \emph{closed} and \emph{open} sublocales of $L$, respectively (that we shall denote simply by $\mathfrak{c}(a)$ and $\mathfrak{o}(a)$ when there is no danger of confusion). For each $a\in L$, $\mathfrak{c}(a)$ and $\mathfrak{o}(a)$ are
complements of each other in $\SS(L)$
and satisfy the expected identities
\begin{equation*}{\label{identities.basic}}
\tbigcap_{i\in I} \mathfrak{c}(a_i)=\cl(\tbigvee_{i\in I} a_i),\quad \cl(a)\vee\cl(b)=\cl(a\wedge b),
\end{equation*}
\[\tbigvee_{i\in I}\op(a_i)=\op(\tbigvee_{i\in I} a_i) \quad\mbox{ and }\quad \op(a)\cap \op(b)=\op(a\wedge b).
\]

Given a sublocale $S$ of $L$, its \emph{closure}, denoted by $\overline{S}$, is the smallest closed sublocale containing it. In this context, the formula $\overline{S}=\cl(\tbigwedge S)$ holds.

A sublocale is said to be \emph{fitted} if it is the intersection of all the open sublocales containing it. Hence, it constitutes a (non-conservative) point-free extension of the notion of saturated subspace. If $S$ is a sublocale  of $L$, the \emph{fitting} of $S$ is the intersection of the open sublocales containing $S$, that is, the smallest fitted sublocale containing $S$ (cf. \cite{otherclosure}).

Finally, if $p$ is a prime element in $L$, the subset $\bl(p)=\set{1,p}$ is easily seen to be a sublocale, sometimes referred to as a \emph{one-point sublocale}.

\subsection{Some standard separation axioms}\label{subsection.standard.sep}

We now recall some important point-free separation axioms (for a comprehensive account of the topic we refer to \cite{separation}).  A frame $L$ is said to be
\begin{enumerate}[label       = \textbullet,
                  labelindent = 0\parindent,
                  leftmargin  = \parindent,
                  labelsep    = *,
                  topsep      = 10pt,
                  itemsep      = 5pt]\item \emph{regular} if for any $a\in L$, the relation $a=\tbigvee\set{ b\mid b\prec a}$ holds, where $b\prec a$ means that $b^*\vee a=1$.
\item \emph{fit} if for any $a,b\in L$ with $a\not\leq b$, there exists a $c\in L$ such that $a\vee c=1$ and $c\to b\not\leq b$.
\item \emph{totally unordered} (or that it satisfies \ref{proper.TU})  if for any pair of frame homomorphisms $h,k\colon L\to M$, 
\begin{equation}{\label{proper.TU}}
\text{the relation } h\leq k \text{ implies } h=k. \tag*{\textup{(}$T_U$\textup{)}}
\end{equation}
\item \emph{$T_1$} if every prime in $L$ is maximal, that is
\begin{equation}{\label{proper.T1}}
\text{if for any prime } p\in L \text{ and any } a>p,\text{ then } a=1.\tag*{\textup{(}$T_1$\textup{)}}
\end{equation}
\item \emph{subfit} if for any $a,b\in L$ with $a\not\leq b$, there  exists a $c\in L$ such that $a\vee c=1\ne b\vee c$.
\item \emph{weakly subfit} if for any $a>0$ in $L$, there exists a $c<1$ with $a\vee c=1$.
\item \emph{prefit} if for any $a>0$ in $L$, there exists a $c>0$ such that $c\prec a$ (i.e., $a\vee c^*=1$).

\end{enumerate}

Although the definitions of subfitness and fitness  are given as first-order properties, as mentioned in the Introduction they can be characterized geometrically: a locale is fit if and only if each of its closed sublocales is fitted (equivalently, iff \emph{any} sublocale whatsover is fitted), and a locale is subfit if and only if each open sublocale is a join of closed sublocales (cf. \cite{atomless,PP12,separation}). Furthermore, a locale is $T_1$ if and only if for each prime $p\in L$, the sublocale $\bl(p)$ is closed (cf. \cite{smarda}).

With regard to the relations between these properties, the implications
$$\text{Regular}\implies\text{Fit}\implies\text{Subfit}\implies\text{Weakly subfit}$$
are easily seen to hold and are all strict; for the remaining relations we refer to \cite{separation} or to Section~\ref{summarlocl} below.
\subsection{Products of locales} We shortly describe a construction of binary coproducts of frames (that is, binary products of locales). For more information, we refer to \cite[IV~4--5]{PP12}. Let $L_1$ and $L_2$  be frames and denote by $\mathcal{D}(L_1\times L_2)$ the frame of downsets of $L_1\times L_2$. A downset $D\in \mathcal{D}(L_1\times L_2)$ is said to be a \emph{cp-ideal} if for all $\{a_i\}_{i\in I}\subseteq L$, $a\in L$, $\{b_j\}_{j\in J}\subseteq M$ and  $b\in M$, 
\begin{align*}
 &(a_i,b)\in D\quad \text{for all }i\in I\implies \bigl(\tbigvee_{i\in I}a_i,b\bigr)\in D,\quad\text{and}\\
 &(a,b_j)\in D\quad \text{for all }j\in J\implies \bigl(a,\tbigvee_{j\in J}b_j\bigr)\in D.
\end{align*}
We note that for any $(a,b)\in L_1\times L_2$, the set 
$$a\oplus b:= \newdownarrow(a,b)\cup \set{(x,y)\mid x=0\text { or } y=0}$$ is the smallest cp-ideal containing $(a,b)$.
Since intersections of cp-ideals are cp-ideals, the set
$$L_1\oplus L_2:=\set{ D\in\mathcal{D}(L_1\times L_2)  \mid  D \text { is a cp-ideal}} $$
is a complete lattice. In fact, it can be shown that $L_1\oplus L_2$ is a frame, and together with the frame homomorphisms $\iota_i\colon L_i\to L_1\oplus L_2$ given by 
$$\iota_{1}(a)=a\oplus 1,\qquad \iota_2(b)=1\oplus b$$
the system $(L_1\oplus L_2,\iota_1,\iota_2)$ is the coproduct of $L_1$ and $L_2$ in the category of frames. Equivalently, $(L_1\oplus L_2, \pi_1,\pi_2)$ in the product of $L_1$ and $L_2$ in the category of locales; where the localic map $\pi_i$ is the right adjoint of $\iota_i$ for $i=1,2$ --- i.e., for any $D\in L_1\oplus L_2$ one has
$$\pi_1(D)=\tbigvee \set{ a\in L_1\mid (a,1)\in D}\quad \text{and}\quad \pi_2(D)=\tbigvee \set{ b\in L_2\mid (1,b)\in D}.$$
Given localic maps $f_i\colon M\to L_i$ for $i=1,2$, the induced map $(f,g)\colon M\to L_1\oplus L_2$ is readily seen to be given by 
$$(f,g)(c)=\set{ (a,b)\in L_1\times L_2\mid f^*(a)\wedge g^*(b)\leq c}.$$

In particular, if $L$ is a locale, the diagonal localic map  $( 1_L,1_L ) \colon L\to L\oplus L$ is injective and so its image defines a sublocale of $L$, namely  $$D_L:=( 1_L,1_L) [L] \subseteq L\oplus L,$$
where $( 1_L,1_L )(a)=\set{(u,v)\in L\times L\mid u\wedge v\leq a}$. We refer to $D_L$  as the \emph{diagonal sublocale} (see \cite[IV~5.3]{PP12}). In particular, for a $D\in D_L$ one has 
\begin{equation}(a,b)\in D \text{  if and only if } (b,a)\in D.\tag{Sym}\label{Sym}\end{equation}

 In this context, we point out that the closure of the diagonal in $L\oplus L$ is given by $\overline{D_L}=\cl (d_L)$, where $$d_L=\tbigvee \set{ a\oplus b\mid a\wedge b=0}=\set{(a,b)\in L_1\times L_2\mid a\wedge b=0}$$ 
(see \cite[V~2.1]{PP12} for details).
\

\subsection{Diagonal separation in \cat{Loc}}\label{diagonal.sep}
Let $\CC$ be a category equipped with a  closure operator $c$ with respect to a proper factorization system $(\mathcal{E},\mathcal{M})$ (see \cite{closure}, cf. also \cite{clementino0,clementino01,clementino1} and references therein). An object $X$ of $\CC$ is said to be \emph{$c$-separated} if the diagonal subobject $(1_X,1_X)\colon X\hookrightarrow X\times X$ is $c$-closed. 

In $\CC=\cat{Top}$, the category of topological spaces, important examples are $c=k$ (the usual topological closure) or $c=s$ (the closure operator sending a subspace to its \emph{saturation} --- i.e., the intersection of all the open subspaces containing it).  As is well known, the $k$-separated objects are precisely the Hausdorff spaces, and the $s$-separated objects are precisely the $T_1$-spaces. 

In $\CC=\cat{Loc}$, the category of locales, the analogous closure operators are given by $c=\mathcal{K}$ (the usual localic closure, recall Subsection~\ref{subsect.sublocales}) and $c=\mathcal{F}$ (the closure operator sending a sublocale to its fitting, see Subsection~\ref{subsect.sublocales}). 

The resulting notions of separation are as follows. On the one hand $\mathcal{K}$-separated locales are precisely the \emph{strongly Hausdorff} locales introduced by Isbell \cite{atomless} (that is, locales whose diagonal is closed) and, on the other hand,  the \emph{$\mathcal{F}$-separated} locales (that is, those whose diagonal is fitted) were  studied recently in \cite{APP21}. 

We point out that both strongly Hausdorff locales and $\FF$-separated locales have an excellent categorical behaviour. Indeed, it follows from general results of the theory of closure operators (see e.g. \cite[Proposition~4.2]{clementino01}) that the following properties hold:

\begin{proposition}\label{categprop.F.sep}
The following assertions hold\textup:
 \begin{enumerate}[label=\textup{(\arabic*)},leftmargin=1.5\parindent]
 \item \label{categprop.F.sep4} If $f,g\colon M\to L$ are localic maps and $L$ is strongly Hausdorff (resp. \donotbreakdash{$\mathcal{F}$}separated), then their equalizer $\mathsf{equ}(f,g)$ is a closed (resp. fitted) sublocale of $M$\textup;
\item \label{categprop.F.sep1} Strongly Hausdorff (resp. \donotbreakdash{$\mathcal{F}$}separated) locales are closed under mono-sources in \cat{Loc}. In particular, if $f\colon M\to L$ is a  monomorphism in \cat{Loc} and $L$ is strongly Hausdorff (resp. \donotbreakdash{$\mathcal{F}$}separated), then so is M\textup;
\item \label{categprop.F.sep2} Strongly Hausdorff (resp. \donotbreakdash{$\mathcal{F}$}separated) locales are closed under limits  in \cat{Loc}\textup;
\item \label{categprop.F.sep3}Strongly Hausdorff (resp. \donotbreakdash{$\mathcal{F}$}separated) locales are extremally epireflective in \cat{Loc}\textup.
\end{enumerate}
\end{proposition}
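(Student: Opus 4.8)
The plan is to derive all four assertions from the general theory of categorical closure operators, the only genuinely point-free input being the verification that closure $\mathcal{K}$ and fitting $\mathcal{F}$ really are closure operators on $\cat{Loc}$. Fix on $\cat{Loc}$ the proper factorization system $(\mathcal{E},\mathcal{M})$ in which $\mathcal{M}$ is the class of sublocale embeddings --- equivalently, of regular (= extremal) monomorphisms, cf.\ Subsection~\ref{subsect.sublocales} --- and $\mathcal{E}$ the associated class of epimorphisms. For each locale $L$ the assignments
\[\mathcal{K}_L(S)=\overline S=\cl(\tbigwedge S)\quad\text{and}\quad \mathcal{F}_L(S)=\tbigcap\set{\op_L(a)\mid S\subseteq\op_L(a)}\]
on $\SS(L)$ are extensive and monotone by construction, and idempotent since a closed (resp.\ fitted) sublocale is its own closure (resp.\ fitting). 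The remaining axiom is \emph{continuity}: for every localic map $f\colon M\to L$ one needs $f[c_M(S)]\subseteq c_L(f[S])$, which --- for idempotent $c$ --- is equivalent to the assertion that the preimage along $f$ of any $c$-closed sublocale of $L$ is a $c$-closed sublocale of $M$. For $c=\mathcal{K}$ this is immediate from $f^{-1}[\cl_L(a)]=\cl_M(f^*(a))$; for $c=\mathcal{F}$ it follows from $f^{-1}[\op_L(a)]=\op_M(f^*(a))$ together with the fact that $f^{-1}[-]\colon\SS(L)\to\SS(M)$, being right adjoint to the image operator $f[-]$, preserves arbitrary intersections --- so the preimage of an intersection of open sublocales is again one. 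Finally, the $c$-closed sublocales are stable under arbitrary intersection (intersections of closed sublocales are closed; intersections of fitted sublocales are fitted). With $c\in\set{\mathcal{K},\mathcal{F}}$ so understood, $c$-separatedness in the sense of Subsection~\ref{diagonal.sep} is exactly the strong Hausdorff property (resp.\ $\FF$-separatedness), and the statements become special cases of \cite[Proposition~4.2]{clementino01}.

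For \ref{categprop.F.sep4}, note that $\mathsf{equ}(f,g)$ is the pullback of the diagonal $D_L\hookrightarrow L\oplus L$ along the induced map $(f,g)\colon M\to L\oplus L$, i.e.\ $\mathsf{equ}(f,g)=(f,g)^{-1}[D_L]$. Since $L$ is $c$-separated, $D_L$ is $c$-closed in $L\oplus L$, hence by continuity its preimage $\mathsf{equ}(f,g)$ is $c$-closed in $M$ --- closed when $c=\mathcal{K}$ and fitted when $c=\mathcal{F}$.

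For \ref{categprop.F.sep1}, let $(f_i\colon M\to L_i)_{i\in I}$ be a mono-source with each $L_i$ strongly Hausdorff (resp.\ $\FF$-separated), and for each $i$ let $h_i\colon M\oplus M\to L_i\oplus L_i$ be the localic map induced by $f_i$ on both coordinates. Because the source is jointly monic one has $D_M=\tbigcap_{i\in I}h_i^{-1}[D_{L_i}]$ in $\SS(M\oplus M)$: the inclusion $\subseteq$ is clear, and conversely any sublocale on which $h_i$ lands in $D_{L_i}$ for every $i$ is one on which the two projections to $M$ become equal after composing with every $f_i$, hence equal, hence one factoring through $D_M$. Each $h_i^{-1}[D_{L_i}]$ is $c$-closed by continuity, so $D_M$ is $c$-closed and $M$ is strongly Hausdorff (resp.\ $\FF$-separated). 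Taking $I$ a singleton recovers the assertion for a single monomorphism, and taking the $f_i$ to be product projections gives closure under products; since every limit cone is a mono-source, \ref{categprop.F.sep2} follows as well. Finally, for \ref{categprop.F.sep3} the full subcategory of strongly Hausdorff (resp.\ $\FF$-separated) locales is replete and closed in $\cat{Loc}$ under products and under sublocales (both instances of \ref{categprop.F.sep1}); since $\cat{Loc}$ is complete and $\mathcal{M}$-well-powered --- the sublocales of a locale form a set --- the standard reflection theorem gives that it is extremally epireflective. The only step above that is not purely formal is the continuity of the fitting operator $\mathcal{F}$, i.e.\ that preimages of fitted sublocales under localic maps are again fitted; everything else is bookkeeping with the abstract machinery.
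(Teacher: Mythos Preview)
Your proposal is correct and follows the same approach as the paper, which does not give an explicit proof but simply defers to the general theory of closure operators, citing the very same \cite[Proposition~4.2]{clementino01}. In fact your write-up is more detailed than the paper's, since you spell out the verification that $\mathcal{K}$ and $\mathcal{F}$ are continuous idempotent closure operators and indicate how each of the four assertions follows from the abstract machinery.
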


\begin{remark}
We emphasize that the closure under monomorphisms  in Proposition~\ref{categprop.F.sep}\,\ref{categprop.F.sep1} is significantly more general than closure under sublocales (i.e., regular monomorphisms). Indeed, in the category \cat{Loc} the structure of monomorphisms is fairly wild, and thus this property may be somewhat surprising.
\end{remark}

Recall the fitness property from Subsection~\ref{subsection.standard.sep}. It is well known (cf. \cite{atomless}) that fitness is closed under products. It immediately follows that it implies $\FF$-separatedness. The reverse implication does not hold in general, as it was proved in \cite{APP21}:

\begin{theorem}[{\cite[Thm.~6.4]{APP21}}]\label{fitnessimpliesfsep}
Fitness implies $\FF$-separatedness, but the converse does not hold in general.
\end{theorem}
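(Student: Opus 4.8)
The two assertions are established separately; the forward implication is essentially immediate from what has already been recalled, whereas the non-reversibility requires a witness locale.

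\emph{Fitness implies $\FF$-separatedness.} If $L$ is fit, then so is $L\oplus L$, since fitness is preserved by products (cf.\ \cite{atomless}). By the geometric characterization of fitness in Subsection~\ref{subsection.standard.sep}, in a fit locale \emph{every} sublocale whatsoever is fitted; in particular the diagonal sublocale $D_L\subseteq L\oplus L$ is fitted, which is precisely the statement that $L$ is $\FF$-separated. One could also argue directly in first-order terms --- given $D\in L\oplus L$ with $D\notin D_L$, use the description $( 1_L,1_L )(a)=\set{(u,v)\in L\times L\mid u\wedge v\leq a}$ of the members of $D_L$ to manufacture, out of a fitness witness in $L$, an open sublocale of $L\oplus L$ containing $D_L$ but missing $D$ --- but the productivity argument is the shortest and sidesteps any computation with cp-ideals.

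\emph{The converse fails.} Here one must produce a locale that is $\FF$-separated but not fit. It is natural to look among strongly Hausdorff locales, since Isbell showed long ago that strong Hausdorffness does not entail fitness: his locale \cite[2.3~(4')]{atomless} --- a simple extension of a subspace of the real line --- is strongly Hausdorff and not fit, and Banaschewski's simple extensions of regular spaces \cite{singly} give a whole family with both features (under mild hypotheses). The remaining point is to check that such a locale nevertheless has a \emph{fitted} diagonal. For a simple extension $L$ of a regular locale the route is: describe $L\oplus L$ concretely; identify $D_L$ via the formula above together with the symmetry condition~\eqref{Sym}; determine the open sublocales of $L\oplus L$ lying above $D_L$; and verify that their intersection collapses back to $D_L$ --- the regularity of the ground locale being exactly what supplies enough such open sublocales to exhaust the complement of $D_L$. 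This is what is carried out in \cite[Thm.~6.4]{APP21}, and it is these examples that are flagged in the Introduction as strongly Hausdorff, not fit, yet $\FF$-separated.

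The forward implication poses no real difficulty once productivity of fitness is in hand. The work lies in the converse, and the main obstacle is twofold: one must be sure the chosen simple extension genuinely violates fitness --- which comes down to the newly adjoined element behaving badly under the Heyting operation, so that some closed sublocale fails to be an intersection of open ones --- while simultaneously carrying out the explicit computation in the square showing that the diagonal, by contrast, \emph{is} such an intersection. I expect this last verification to be the crux; it is also why the finer separation question studied in the body of the paper cannot simply reuse these classical examples and needs genuinely new input.
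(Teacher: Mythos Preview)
Your proposal is correct and matches the paper's treatment: the forward implication via productivity of fitness is exactly the argument given in the sentence immediately preceding the theorem, and for the converse the paper (like you) defers to \cite[Thm.~6.4]{APP21}, with the Introduction confirming that the witnesses there are precisely the Banaschewski simple extensions of regular spaces you identify. The paper does not itself reproduce the verification that those examples are $\FF$-separated, so your sketch of that step is additional detail consistent with --- rather than a divergence from --- what is stated here.
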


In \cite{APP21} (see also \cite{ArrietaThesis} for a more detailed account, cf. also Table~\ref{table.resumen}) it was shown that there is a pleasant parallel between the strong Hausdorff property and  $\mathcal{F}$-separatedness.  For instance, both can be characterized by a Dowker-Strauss type condition on the combinatorial structure of the frame homomorphisms with a given domain (cf. \cite[Sect.~4]{APP21}).

Moreover, both properties can be decomposed as the conjunction of total unorderedness  (cf. Subsection~\ref{subsection.standard.sep}) and a certain condition involving weakened frame homomorphisms (cf. \cite[Sect.~5]{weakened} for the closed case and \cite[Sect.~6]{APP21} for the fitted case). 
In particular, one has the following:

\begin{theorem}[{\cite[Cor.~4.5.1]{APP21}}]\label{fsep.imp.tu}
$\FF$-separatedness implies total unorderedness.
\end{theorem}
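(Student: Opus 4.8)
The plan is to reduce the statement to the defining property of $\FF$-separatedness --- that the diagonal sublocale $D_L\subseteq L\oplus L$ equals its fitting $\tbigcap\{\mathfrak{o}(U)\mid D_L\subseteq\mathfrak{o}(U)\}$. Fix frame homomorphisms $h\leq k\colon L\to M$ (for an arbitrary frame $M$); write $\Delta=(1_L,1_L)\colon L\to L\oplus L$ for the diagonal localic map, so that $D_L=\Delta[L]$, and $\Delta^*\colon L\oplus L\to L$ for its left adjoint (thus $\Delta^*(a\oplus b)=a\wedge b$); let $e=(h_*,k_*)\colon M\to L\oplus L$ be the localic map induced by the localic maps $h_*,k_*\colon M\to L$, whose left adjoint frame homomorphism is determined by $e^*(a\oplus b)=h(a)\wedge k(b)$. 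The whole argument reduces to proving $e[M]\subseteq D_L$: once this is known, every $D$ in $e[M]$ lies in $D_L$, so by the symmetry property~\eqref{Sym} we have $\{a\mid(a,1)\in D\}=\{a\mid(1,a)\in D\}$ and hence $\pi_1(D)=\pi_2(D)$; applying this to $D=e(m)$ for every $m\in M$ gives $h_*=\pi_1\circ e=\pi_2\circ e=k_*$, i.e.\ $h=k$.

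By $\FF$-separatedness it suffices to show $e[M]\subseteq\mathfrak{o}(U)$ for each $U\in L\oplus L$ with $D_L\subseteq\mathfrak{o}(U)$. First, any such $U$ satisfies $\Delta^*(U)=1$: taking preimages along $\Delta$ and using $\Delta^{-1}(D_L)=L$ and $\Delta^{-1}(\mathfrak{o}(U))=\mathfrak{o}_L(\Delta^*(U))$ gives $L=\Delta^{-1}(D_L)\subseteq\mathfrak{o}_L(\Delta^*(U))$, which forces $\Delta^*(U)=1$.

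The single step that uses the hypothesis $h\leq k$ is the following estimate. Since every cp-ideal is the join in $L\oplus L$ of the principal cp-ideals $a\oplus b$ below its members, and $e^*$ preserves joins,
\[
e^*(U)=\tbigvee_{(a,b)\in U}\bigl(h(a)\wedge k(b)\bigr)\ \geq\ \tbigvee_{(a,b)\in U}\bigl(h(a)\wedge h(b)\bigr)=h\Bigl(\tbigvee_{(a,b)\in U}(a\wedge b)\Bigr)=h\bigl(\Delta^*(U)\bigr)=1,
\]
the inequality being exactly the substitution $h(b)\leq k(b)$. Therefore $e^{-1}(\mathfrak{c}(U))=\mathfrak{c}_M(e^*(U))=\mathsf{O}$, so $e[M]\cap\mathfrak{c}(U)=e[e^{-1}(\mathfrak{c}(U))]=e[\mathsf{O}]=\mathsf{O}$, whence $e[M]\subseteq\mathfrak{o}(U)$ by complementation in the coframe $\SS(L\oplus L)$. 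Letting $U$ range over all elements with $D_L\subseteq\mathfrak{o}(U)$ yields $e[M]\subseteq\tbigcap\{\mathfrak{o}(U)\mid D_L\subseteq\mathfrak{o}(U)\}=D_L$, completing the proof.

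I do not expect any real obstacle: apart from the displayed estimate the argument only uses routine sublocale calculus (the identities $g^{-1}(\mathfrak{c}(c))=\mathfrak{c}(g^*(c))$ and $g[g^{-1}(S)]=g[M]\cap S$ for a localic map $g$, together with the description of cp-ideals as joins of principal ones). It is worth noting the parallel with the strongly Hausdorff case: there the same hypothesis $h\leq k$ makes $e^*(d_L)=\tbigvee\{h(a)\wedge k(b)\mid a\wedge b=0\}=0$, so $e$ factors through the \emph{closure} $\overline{D_L}=\mathfrak{c}(d_L)$ instead, and the strong Hausdorff property likewise implies total unorderedness --- consistent with this property sitting symmetrically below both sides of the duality.
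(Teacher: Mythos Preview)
Your argument is correct. The only place one might pause is the identity $e[e^{-1}(\mathfrak{c}(U))]=e[M]\cap\mathfrak{c}(U)$, but you do not actually need it: once you have $e^*(U)=1$, the equivalence ``$g[M]\subseteq\mathfrak{o}(a)$ iff $g^*(a)=1$'' (which is exactly the adjunction $g_!\dashv g^{-1}$ applied to the complemented pair $\mathfrak{o}(a),\mathfrak{c}(a)$) gives $e[M]\subseteq\mathfrak{o}(U)$ immediately. Your roundabout via $\mathfrak{c}(U)$ is fine too, since in the special case $g^{-1}(\mathfrak{c}(a))=\mathsf{O}$ both sides of the claimed identity are $\mathsf{O}$.

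As for comparison with the paper: the present paper does not prove this theorem at all---it is simply quoted from \cite{APP21}, where it appears as a corollary of the characterization ``$\mathcal{F}$-separated $\iff$ $(T_U)$ + (A)'' obtained via the Dowker--Strauss-type description (cf.\ Table~\ref{table.resumen}). Your proof is therefore genuinely different in spirit: rather than passing through the machinery of almost homomorphisms and cover-respecting pairs, you argue directly from the definition of $\mathcal{F}$-separatedness using only elementary sublocale calculus and the universal property of $L\oplus L$. This makes your proof more self-contained and transparent for the specific implication at hand; the approach in \cite{APP21} buys instead the full decomposition $\mathcal{F}$-sep $=$ $(T_U)\wedge(\mathrm{A})$, of which the implication here is just one half. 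Your closing remark about the parallel with the strongly Hausdorff case (replacing $\mathfrak{o}(U)$ by $\mathfrak{c}(d_L)$) is apt and mirrors the symmetry the paper emphasizes.
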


\begin{table}
\caption{The closed-fitted duality}
{\label{table.resumen}}
\begin{center}
{\small\begin{tabular}{p{0.25\textwidth}p{0.33\textwidth}p{0.33\textwidth}}
\toprule[1.2pt] {\sc } & {\sc Closedness} & {\sc Fittedness} \\
\bottomrule[1.2pt]
\addlinespace[5pt]
\raggedright  Dowker-Strauss-type characterization & \raggedright Strong Hausdorff $\equiv$  no distinct homomorphisms respect disjoint pairs\index{frame homomorphism! pair of~$\sim$~respecting disjoint pairs |textsl} &\raggedright $\FF$-separated $\equiv$ no distinct homomorphisms\index{frame homomorphism!pair of~$\sim$~respecting covers|textsl} respect covers
\tabularnewline
\addlinespace[5pt]\midrule\addlinespace[5pt]
\raggedright 
Relaxed morphisms& \raggedright Weak homomorphisms:\index{homomorphism!weak~$\sim$|textsl} \\[2pt]
(1) Morphism in $\cat{Sup}$ \\ (2) Preserve $\top$ \\ (3) Preserve disjoint pairs &\raggedright Almost homomorphisms:\index{homomorphism!almost~$\sim$|textsl} \\[2pt]
(1) Morphism in $\cat{PreFrm}$ \\ (2) Preserve $\bot$ \\ (3) Preserve covers
\tabularnewline
\addlinespace[5pt]\midrule\addlinespace[5pt]
\raggedright Every relaxed morphism is a\\ frame homomorphism & \raggedright Property~\textup{(W)}& Property~\textup{(A)}\tabularnewline
\addlinespace[5pt]\midrule\addlinespace[5pt]
\raggedright Sufficient condition & \raggedright Hereditary normality implies property~(W) &\raggedright Hereditary extremal disconnectedness implies property (A)\tabularnewline
\addlinespace[5pt]\midrule\addlinespace[5pt]
\raggedright Downset frames & \raggedright $\mathsf{Dwn}(X)$ is hereditarily normal iff it satisfies (W) & \raggedright  $\mathsf{Dwn}(X)$ is hereditarily extremally disconnected iff it satisfies (A) 
\tabularnewline
\addlinespace[5pt]\midrule\addlinespace[5pt]
\raggedright Characterization by relaxed morphisms & \raggedright Strong Hausdorff $\equiv$  \\ ($T_U$) + (W) & \raggedright $\FF$-separated $\equiv$ ($T_U$) + (A)\tabularnewline
\addlinespace[5pt]\midrule\addlinespace[5pt]
\raggedright Associated first order property\index{frame!with property \textup{(H)}|textsl}\index{property!H@\textup{(H)}|textsl} & \raggedright Property~\ref{proper.H} & Property~\ref{proper.F}\index{frame!with property \textup{(F)}|textsl}\index{property!F@\textup{(F)}|textsl}
\tabularnewline
 
\addlinespace[5pt]\bottomrule[1.2pt]
\end{tabular}
}
\end{center}
\end{table}

\subsection{The conservative Hausdorff property}\label{subs.cons.haus}
Since the functor $\Omega\colon\cat{Top}\to\cat{Loc}$ does not preserves products, there is no reason to assume that the strong Hausdorff property is a conservative extension of its classical counterpart. Indeed, if $X$ is a topological space and $\Omega(X)$ is strongly Hausdorff, then $X$ is Hausdorff; but the converse does not hold in general. In view of this situation, several groups of authors investigated possible point-free conservative extensions of the Hausdorff axiom. From very differently motivated approaches, Paseka and \v Smarda \cite{paseka} and Johnstone and Sun-shu-Hao \cite{johnstoneHao} obtained a solution for this problem   (for a detailed historical account of the subject see also \cite[III.1]{separation}).

A frame is said to be \emph{Hausdorff} (or that it satisfies property \ref{proper.H})  if
\begin{equation}{\label{proper.H}}
1\ne a\not\leq b \ \ \implies\ \ \exists u,v\in L\text{ such that }u\not\leq a,\ v\not\leq b\text{ and } u\wedge v=0. \tag*{\textup{(H)}}
\end{equation}

\begin{proposition}
Every strongly Hausdorff locale is Hausdorff.
\end{proposition}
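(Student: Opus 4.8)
The plan is to work with the description of the strong Hausdorff property through the diagonal sublocale. By definition $L$ is strongly Hausdorff exactly when $D_L$ is closed in $L\oplus L$; since $\overline{D_L}=\cl(d_L)$, this says precisely that $D_L=\cl(d_L)=\set{D\in L\oplus L\mid D\supseteq d_L}$, or equivalently that every cp-ideal of $L\times L$ containing $d_L$ already belongs to $D_L$ and hence --- the point we shall exploit --- is symmetric in the sense of \eqref{Sym}. Recall also that $d_L=\tbigvee\set{u\oplus v\mid u\wedge v=0}$.

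So assume $L$ is strongly Hausdorff, and let $a,b\in L$ with $a\neq 1$ and $a\not\leq b$; we must produce $u,v$ as in \ref{proper.H}. I would argue by contradiction, assuming that $u\wedge v=0$ implies $u\leq a$ or $v\leq b$ for all $u,v\in L$. The first step is to repackage this hypothesis as a single inequality in the frame $L\oplus L$: for each pair $(u,v)$ with $u\wedge v=0$ the assumption gives $u\leq a$ or $v\leq b$, hence $u\oplus v\leq\iota_1(u)\leq\iota_1(a)$ or $u\oplus v\leq\iota_2(v)\leq\iota_2(b)$, so in either case $u\oplus v\leq\iota_1(a)\vee\iota_2(b)$; joining over all such pairs and using the formula for $d_L$ gives $d_L\leq\iota_1(a)\vee\iota_2(b)$.

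The second step is the routine identification of this join: from $\iota_1(a)=a\oplus 1$, $\iota_2(b)=1\oplus b$ and the explicit description of $\oplus$, one checks that $\iota_1(a)\cup\iota_2(b)=\set{(x,y)\in L\times L\mid x\leq a\ \text{or}\ y\leq b}$ and that the right-hand side is itself a cp-ideal, so it equals $\iota_1(a)\vee\iota_2(b)$. Combining the two steps, $\iota_1(a)\vee\iota_2(b)$ is a cp-ideal containing $d_L$, hence lies in $D_L$ by strong Hausdorffness, hence satisfies \eqref{Sym}. Since $(a,1)\in\iota_1(a)\subseteq\iota_1(a)\vee\iota_2(b)$, symmetry forces $(1,a)\in\iota_1(a)\vee\iota_2(b)$, that is $1\leq a$ or $a\leq b$; as $a\neq 1$ this yields $a\leq b$, contradicting our choice of $a,b$. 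Therefore the required $u,v$ exist and \ref{proper.H} holds.

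I do not anticipate a genuine obstacle here: the argument is short, and the only places that need a little care are the translation of the negation of \ref{proper.H} into the inequality $d_L\leq\iota_1(a)\vee\iota_2(b)$, the explicit computation of that join, and invoking \eqref{Sym} for elements of $D_L$ at the right moment. If one prefers to avoid the contradiction, the very same computation shows directly that the cp-ideal $\set{(x,y)\mid x\leq a\ \text{or}\ y\leq b}$ fails \eqref{Sym} whenever $a\neq 1$ and $a\not\leq b$ (test it at $(a,1)$), so by strong Hausdorffness it cannot contain $d_L$ --- and that failure is exactly the assertion of \ref{proper.H}.
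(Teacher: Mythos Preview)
Your argument is correct. The paper itself does not supply a proof of this proposition---it is stated in the preliminaries as a known fact (cf.\ \cite{paseka,johnstoneHao,separation})---so there is nothing to compare against directly.

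That said, your approach meshes perfectly with the paper's own methodology: the cp-ideal $\iota_1(a)\vee\iota_2(b)=\set{(x,y)\mid x\leq a\text{ or }y\leq b}$ you construct is exactly the object the paper calls $a\invamp b$ in the proof of Theorem~\ref{fof}, and the test via \eqref{Sym} at the pair $(a,1)$ is the same trick used there to show that \donotbreakdash{$\mathcal{F}$}separatedness implies property~\ref{proper.F}. In effect your proof is the closed-side analogue of that argument: instead of using ``$a\invamp b\notin D_L$ implies $a\invamp b$ falls outside some open sublocale containing $D_L$'', you use ``$a\invamp b\notin D_L=\cl(d_L)$ implies $d_L\not\subseteq a\invamp b$'', which unwinds immediately to the existence of the desired disjoint pair. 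The small verifications (that $a\invamp b$ is a cp-ideal, that it equals the join $\iota_1(a)\vee\iota_2(b)$, and that $d_L\subseteq a\invamp b$ encodes the negation of~\ref{proper.H}) are all routine and handled correctly.
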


Moreover, property \ref{proper.H} is a conservative extension of the homonymous topological axiom --- i.e., a $T_0$-topological space $X$ is Hausdorff if and only if the locale $\Omega(X)$ is Hausdorff (the importance of conservativeness should not be overestimated, though: it is the strong Hausdorff property, and not property \ref{proper.H}, that in the presence of compactness behaves as expected, see \cite{separation}).

\section{A dual to the conservative Hausdorff property --- the property (F)}
If we refer to the structural parallel between the strong Hausdorff property and $\mathcal{F}$-separatedness (cf. Subsection~\ref{diagonal.sep}), it is a natural question whether the conservative Hausdorff axiom (cf. Subsection~\ref{subs.cons.haus}) has a natural dual counterpart for $\mathcal{F}$-separatedness --- i.e.,  whether there is a first-order separation-type property which is to  $\mathcal{F}$-separatedness as propety \ref{proper.H} is to the strong Hausdorff property. In this section we show that the property \ref{proper.F}, introduced by the  author in his PhD thesis \cite{ArrietaThesis}  provides an affirmative answer to this question.

\begin{theorem}{\label{fof}}
For a frame $L$, the following conditions are equivalent and are all implied by \donotbreakdash{$\mathcal{F}$}separatedness\textup:
\begin{enumerate}[label=\textup{(\roman*)},leftmargin=1.5\parindent]
\item \label{fof1} For every $a,b\in L$ such that $1\ne a\not\leq b$, there exist $u,v\in L$  such that  $u\not\leq a$, $v\not\leq b$ and  $(u\to a)\vee (v\to b)=1$\textup;
\item \label{fof2} For every $a,b\in L$  such that  $1\ne a\not\leq b$, there exist $u,v\in L$  such that  $a<u$, $b<v$ and  $(u\to a)\vee (v\to b)=1$\textup;
\item \label{fof3} For every $a,b\in L$  such that  $1\ne a\not\leq b$, there exist $u,v\in L$  such that  $v\leq a<u$, $v\not\leq b$ and $(u\to a)\vee (v\to b)=1$\textup;
\item \label{fof4} For every $a,b\in L$  such that  $1\ne a\not\leq b$, there exist $u,v\in L$  such that  $u\to a\ne a$, $v\to b\ne b$ and $u\vee v=1$\textup;
\item \label{fof5} For every $a,b\in L$  such that  $1\ne a\not\leq b$, there exist $u,v\in L$  such that  $a\leq u$, $b\leq v$, $u\to a\ne a$, $v\to b\ne b$ and $u\vee v=1$\textup;
\item \label{fof6} For every $a,b\in L$  such that  $1\ne a\not\leq b$, there exist $u,v\in L$  such that  $a\leq u$, $u\to a\ne a$, $a\wedge (v\to b)\not\leq b$ and $u\vee v=1$\textup.
\end{enumerate}
\end{theorem}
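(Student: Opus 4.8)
The plan is to prove the cycle of equivalences (i)$\Rightarrow$(ii)$\Rightarrow$(iii)$\Rightarrow$(i) first, then fold in (iv), (v), (vi) either through the same cycle or via the Heyting rules \ref{H1}--\ref{H10}, and finally establish that $\mathcal{F}$-separatedness implies (i). The crucial observation throughout is the identity \ref{H6}: $u\to a\ne a$ holds if and only if $u\wedge a'>a$ fails to be an equality in the sense that $(u\to a)\to a\ne$\ldots — more usefully, $u\to a\ne a$ is equivalent (using \ref{H3}, $a\leq u\to a$) to the strict inequality $a<u\to a$, and by \ref{H4} one may always replace $u$ by $u\vee a$ without changing $u\to a$. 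This normalization — replacing an arbitrary witness $u$ by $u\vee a\geq a$ — is what bridges the "unnormalized" forms (i), (iv) and the "normalized" forms (ii), (iii), (v), (vi).

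First I would treat (i)$\Leftrightarrow$(iv): given $u$ with $u\not\leq a$, note $u\to a\ne a$ follows since if $u\to a=a$ then by \ref{H9}, $u\leq (u\to a)\to a=a\to a=1$, which does not immediately give $u\leq a$ — so instead I argue from \ref{H5}: $u\wedge(u\to a)=u\wedge a$, so $u\to a=a$ would force $u\wedge a=u\wedge a$, unhelpful; the right route is that $u\not\leq a$ together with \ref{H2} gives $u\to a\ne 1$, but we want $\ne a$. Here the genuine content is: $(u\to a)\vee(v\to b)=1$ with $u\to a\ne 1\ne v\to b$ (which is forced by $u\not\leq a$, $v\not\leq b$ via \ref{H2}), versus $u\vee v=1$ with $u\to a\ne a\ne v\to b$. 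Passing between "$\vee$ of the Heyting arrows is $1$" and "$\vee$ of the elements is $1$" is exactly the open-sublocale/fitting translation: $(u\to a)\vee(v\to b)=1$ says $\mathfrak{o}(u\to a)\cap\mathfrak{o}(v\to b)=\mathsf{O}$ inside $\mathfrak{c}(a)\cap\mathfrak{c}(b)$, and I expect this sublocale-theoretic reformulation to be the cleanest way to organize the whole web of equivalences — rewrite each clause as a statement about $\mathfrak{c}(a)$, $\mathfrak{o}(u)$, $\mathfrak{c}(b)$, $\mathfrak{o}(v)$ and their inclusions, then the equivalences become near-trivial manipulations of closed and open sublocales using the identities in Subsection~\ref{subsect.sublocales}. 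For the implications that strengthen the witnesses (e.g. (i)$\Rightarrow$(ii) or (i)$\Rightarrow$(v)), replace $u$ by $u\vee a$ and $v$ by $v\vee b$, using \ref{H4} to see the Heyting arrows are unchanged and checking that $u\not\leq a$ becomes the strict inequality $a<u\vee a$; for (iii) one additionally truncates $v$ to $v\wedge u\wedge(\text{something})$ — more precisely, one wants $v\leq a$, which suggests replacing $v$ by $v\to b$ composed appropriately, or observing that the condition $v\not\leq b$ with $v\leq a$ can be extracted from $a\wedge(v\to b)\not\leq b$ as in clause (vi), so (iii) and (vi) are the "meet-normalized" forms dual to the "join-normalized" (ii), (v).

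For the final assertion — $\mathcal{F}$-separatedness implies (i) — I would argue directly from the definition that the diagonal $D_L$ is fitted in $L\oplus L$. Suppose $1\ne a\not\leq b$. The element $a\oplus b\in L\oplus L$ is not above the "diagonal-generating" element in a way that detects $a\not\leq b$; concretely, one shows the open sublocale $\mathfrak{o}(a\oplus b)$, intersected with $D_L$, is a proper fitted-obstruction, and since $D_L$ is fitted there must be a further open sublocale $\mathfrak{o}(w)$ containing $D_L$ but not $\mathfrak{o}(a\oplus b)\cap(\text{relevant closed piece})$. Unwinding $w$ as a cp-ideal and using the explicit description $( 1_L,1_L )(c)=\set{(x,y)\mid x\wedge y\leq c}$ for points of $D_L$, together with the formula for $(f,g)$, should produce a pair $(u,v)$ — essentially the "coordinates" witnessing that $w$ separates — satisfying $u\not\leq a$, $v\not\leq b$, and the join condition $(u\to a)\vee(v\to b)=1$, the last equality coming from the fact that $w$ sits above the whole diagonal. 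The main obstacle, I expect, is precisely this last step: carefully decoding what "the open sublocale $\mathfrak{o}(w)\supseteq D_L$ fails to contain $\mathfrak{o}(a\oplus b)\cap\mathfrak{c}(\ldots)$" means at the level of elements of $L$, since it requires juggling the coproduct construction, the down-set/cp-ideal description, and the Heyting arrow in $L\oplus L$ simultaneously; the rest of the theorem is a routine — if lengthy — exercise in the Heyting calculus \ref{H1}--\ref{H10} and the sublocale identities.
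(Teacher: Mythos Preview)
Your plan has the right shape but two genuine gaps.

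\textbf{The bridge between the two groups of conditions.} Conditions (i)--(iii) are of the form ``$(u\to a)\vee(v\to b)=1$ with $u\not\leq a$, $v\not\leq b$'', while (iv)--(vi) are of the form ``$u\vee v=1$ with $u\to a\ne a$, $v\to b\ne b$''. You correctly observe that $u\not\leq a$ does \emph{not} force $u\to a\ne a$ (take $u=1$), so one cannot pass from (i) to (iv) with the same witnesses. The device that actually works --- and which your sketch never isolates --- is the substitution $u':=u\to a$, $v':=v\to b$. In one direction, $(u\to a)\vee(v\to b)=1$ becomes $u'\vee v'=1$, and $u\not\leq a$ gives $u'\to a=(u\to a)\to a\geq u\not\leq a$ by \ref{H9}, hence $u'\to a\ne a$. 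In the other direction, $u\vee v=1$ gives $(u'\to a)\vee(v'\to b)\geq u\vee v=1$ again by \ref{H9}, and $u\to a\ne a$ means $u':=u\to a\not\leq a$. This is exactly how the paper closes its cycle at (iii)$\Rightarrow$(iv) and (vi)$\Rightarrow$(i). Your normalization $u\mapsto u\vee a$ handles (i)$\Rightarrow$(ii) and (iv)$\Rightarrow$(v), and you do mention ``replacing $v$ by $v\to b$'' in passing, but you place it in the wrong role (getting $v\leq a$ for (iii)) rather than as the bridge between the two groups; the proposed ``sublocale-theoretic reformulation'' does not supply it either.

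\textbf{The element witnessing failure of fittedness.} For $\mathcal{F}$-separatedness $\Rightarrow$ (i) you need a concrete element of $L\oplus L$ lying outside $D_L$, so that fittedness of $D_L$ produces a separating $U$. The paper uses the cp-ideal $a\invamp b=\{(x,y)\mid x\leq a\text{ or }y\leq b\}$; it lies outside $D_L$ because $(a,1)\in a\invamp b$ while $(1,a)\notin a\invamp b$ (using $a\ne 1$ and $a\not\leq b$), violating \eqref{Sym}. The ``or'' structure then makes the Heyting arrow $(x\oplus y)\to(a\invamp b)$ easy to analyse and yields \emph{both} $u\not\leq a$ and $v\not\leq b$ together with $(u\to a)\vee(v\to b)=1$ directly. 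Your suggestion of working with $a\oplus b$ and an unspecified ``fitted-obstruction'' does not pin this down: with $a\oplus b$ the condition $(u,v)\notin a\oplus b$ only gives $u\not\leq a$ \emph{or} $v\not\leq b$, not both, so the extraction of witnesses satisfying (i) is not clear. Identifying the right test element is the actual content of this step; once $a\invamp b$ is in hand, the unwinding you anticipate as the ``main obstacle'' is short.
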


\begin{proof}
Let us start by showing that \donotbreakdash{$\mathcal{F}$}separatedness implies \ref{fof1}. Let $1\ne a\not\leq b$. Then $$a\invamp b=\set{(x,y)\in L\times L\mid x\leq a \textrm{ or } y\leq b}$$ 
is clearly a cp-ideal, and since $(a,1)\in a\invamp b$ and $(1,a)\notin a\invamp b$, it follows from \eqref{Sym} that $a\invamp b\not\in D_L$. Hence $a\invamp b\not\in D_L=\tbigcap_{D_L\subseteq \op(U)}\op(U)$ because $L$ is \donotbreakdash{$\mathcal{F}$}separated and so there exists a $U\in L\oplus L$ such that $D_L\subseteq\op(U)$ and $a\invamp b\notin \op(U)$ --- i.e., $\tbigcap_{(x,y)\in U}((x\oplus y)\to a\invamp b)\not\subseteq a\invamp b$. Therefore, there is a pair $(u,v)\in L\times L$ such that for all $(x,y)\in U$, one has $(u,v)\in (x\oplus y)\to a\invamp b$ but $(u,v)\not\in a\invamp b$. The latter means $u\not\leq a$ and $v\not\leq b$; while the former means that for all $(x,y)\in U$ one has $(u\wedge x)\oplus (v\wedge y)\subseteq a\invamp b$, or equivalently $(u\wedge x,v\wedge y)\in a\invamp b$. Hence, for each $(x,y)\in U$, one has either $u\wedge x\leq a$ or $v\wedge y\leq b$ and so $x\wedge y\leq (u\to a)\vee(v\to b)$. Since $D_L\subseteq \op(U)$, the system $\widehat{U}=\set{x\wedge y\mid (x,y)\in U}$ is easily seen to be a cover of $L$, hence $(u\to a) \vee (v\to b)=1$.

We now check that all the conditions are equivalent:\\[2mm]
\ref{fof1}$\implies$\ref{fof2}: Let $1\ne a\not\leq b$. Then there are $u,v\in L$ with $u\not\leq a$, $v\not\leq b$ and $(u\to a)\vee (v\to b)=1$. Set $u':=u\vee a$ and $v':=v\vee b$. Then $a<u'$, $b<v'$ and $(u'\to a)\vee (v'\to b)=(u\to a)\vee (v\to b)=1$.\\[2mm]
\ref{fof2}$\implies$\ref{fof3}: Let $1\ne a\not\leq b$. Then one has $1\ne a\not\leq a\to b$ and hence there exist $u,v\in L$ such that $a<u$, $a\to b<v$ and $(u\to a)\vee (v\to (a\to b))=1$. Let $v':=v\wedge a$. Since $v\not\leq a\to b$, one has $v'\not\leq b$ (and $v'\leq a$). Moreover, by \ref{H7}, it follows that $v\to (a\to b)=(v\wedge a)\to b=v'\to b$. Hence the pair $u,v'$ satisfies the required conditions.\\[2mm]
\ref{fof3}$\implies$\ref{fof4}: Let $1\ne a\not\leq b$. Then there exist $u,v\in L$ such that $v\leq a<u$, $v\not\leq b$ and $(u\to a)\vee (v\to b)=1$. Let $u':=u\to a$ and $v':= v\to b$. Then $u'\vee v'=1$. Moreover, if $u'\to a\leq a$, then $u\leq (u\to a)\to a\leq a$ by \ref{H9}, a contradiction. Hence $u'\to a\ne a$ and similarly, $v'\to b\ne b$.\\[2mm]
\ref{fof4}$\implies$\ref{fof5} follows easily because $u\to a=(u\vee a)\to a$ and $v\to b=(v\vee b)\to b$; thus we may replace $u$ (resp. $v$) by $u\vee a$ (resp. $v\vee b$).\\[2mm]
\ref{fof5}$\implies$\ref{fof6}: Let $1\ne a\not\leq b$. Then $1\ne a\not\leq a\to b$ and hence there exist $u,v\in L$ such that $a\leq u$, $a\to b\leq v$, $u\to a\ne a$, $v\to (a\to b)\ne a\to b$ and $u\vee v=1$. By \ref{H5} and \ref{H7} one has ${a\wedge (v\to b)}=a\wedge (a\to (v\to b))=a\wedge (v\to (a\to b))\not \le b$.\\[2mm]
\ref{fof6}$\implies$\ref{fof1}: Let $1\ne a\not\leq b$. Then there exist $u,v\in L$ such that $a\leq u$, $u\to a\ne a$, $a\wedge (v\to b)\not\leq b$ and $u\vee v=1$. Let $u':=u\to a$ and $v':=v\to b$. Then $u'\not\leq a$, $v'\not\leq b$ and $(u'\to a) \vee (v'\to b)\geq u\vee v=1$ by \ref{H9}.
 \end{proof}

A frame satisfying one (and hence all) of the equivalent conditions above will be said to satisfy property~\ref{proper.F}, that is, $L$ satisfies \emph{property~\ref{proper.F}} if \index{frame!with property \textup{(F)}|textsl}\index{property!F@\textup{(F)}|textsl}
\begin{equation}{\label{proper.F}}
1\ne a\not\leq b \ \ \implies\ \ \exists u,v\in L\text{ such that }u\to a\ne a,\ v\to b\ne b\text{ and } u\vee v=1. \tag*{\textup{(F)}}
\end{equation}

\begin{corollary}
Fitness implies property \ref{proper.F}
\end{corollary}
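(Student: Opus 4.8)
The plan is to obtain the corollary by composing the two implications already in place. By Theorem~\ref{fitnessimpliesfsep}, fitness implies $\mathcal{F}$-separatedness, and by Theorem~\ref{fof}, $\mathcal{F}$-separatedness implies property~\ref{proper.F}; concatenating these gives the statement immediately. Since Theorem~\ref{fof} has already been proved, essentially nothing remains, so the only ``difficulty'' is recognising that this is the quickest route rather than re-deriving everything from scratch.

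For completeness I would also record the direct first-order argument, which bypasses the diagonal sublocale entirely and aims at the equivalent form \ref{fof4} of property~\ref{proper.F}. Suppose $L$ is fit and let $a,b\in L$ with $1\ne a\not\leq b$. Apply the defining condition of fitness (Subsection~\ref{subsection.standard.sep}) to the pair $(a,b)$: there is a $c\in L$ with $a\vee c=1$ and $c\to b\not\leq b$; by \ref{H3} one always has $b\leq c\to b$, so this says $c\to b\ne b$. Now put $u:=a$ and $v:=c$. Then $u\to a=a\to a=1\ne a$ (here we use $a\ne 1$), $v\to b=c\to b\ne b$, and $u\vee v=a\vee c=1$, which is exactly condition \ref{fof4}. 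By the equivalences of Theorem~\ref{fof}, $L$ satisfies property~\ref{proper.F}.

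Either way the proof is routine and there is no genuine obstacle. The only mild points to keep in mind in the direct version are the harmless identifications ``$c\to b\not\leq b$ iff $c\to b\ne b$'' (from \ref{H3}) and ``$a\to a=1$'' (from \ref{H2}), which let one take the first witness $u$ to be $a$ itself; neither of these causes any trouble. I would present the short chained proof as the main argument and, if desired, mention the direct route in a sentence.
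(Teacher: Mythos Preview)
Your main argument is exactly the paper's intended one: the corollary is stated without proof immediately after Theorem~\ref{fof}, so it is meant to follow by concatenating Theorem~\ref{fitnessimpliesfsep} (fitness $\Rightarrow$ $\mathcal{F}$-separatedness) with Theorem~\ref{fof} ($\mathcal{F}$-separatedness $\Rightarrow$ \ref{proper.F}). Your additional direct first-order verification of \ref{fof4} is correct and pleasant, but it is extra --- the paper does not include it.
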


\begin{remark}\label{fitness.F}
 We point out here that the implication ($\mathcal{F}$-sep)$\implies$ \ref{proper.F} cannot be reversed. 
Such a counterexample can be  given already in the context of singly generated frame extensions \cite{singly}. Indeed, let $\tau$ be the topology on the real line given by open sets of the form $U\cup (V\cap \mathbb{Q})$ where $U,V\in \tau_{us}$, where $\tau_{us}$ denotes the usual topology on the real line. It is known that $\tau$ is not totally unordered (cf. \cite[Example~III\,1.5]{STONE}); so in particular it is not $\mathcal{F}$-separated (recall Theorem~\ref{fsep.imp.tu}). On the other hand, $\tau$ satisfies property \ref{proper.F}. Indeed, let $U,V\in \tau$ with $\R\ne U\not\subseteq V$. Select an $x\in U$ with $x\not\in V$ and a $y\not\in U$. Since $x\ne y$ and $(\R,\tau_{us})$ is regular, there are open sets $U_1, U_2\in\tau_{us}$ with $x\in U_1$, $y\in U_2$ and whose closures $\overline{U_1}$ and $\overline{U_2}$ in  $(\R,\tau_{us})$ are disjoint. Let $U'=\R-\overline{U_2}$ and $V'=\R-\overline{U_1}$. Then $U',V'\in\tau$ because $\tau_{us}\subseteq \tau$. Clearly, $U'\cup V'=\R$. Moreover, if $\mathsf{int}(U\cup \overline{U_2})=U'\to U\subseteq U$, then since $y\in  U_2 \subseteq \mathsf{int}(U\cup \overline{U_2})$, we would have $y\in U$, a contradiction. Similarly, one verifies that $V'\to V\not\subseteq V$.
\end{remark}

Recall that a locale is $T_1$ if every prime is maximal (cf. Subsection~\ref{subsection.standard.sep}). In this context, we have the following:

\begin{proposition}\label{FimpliesT1}
Property~\ref{proper.F} implies property~\ref{proper.T1}.
\end{proposition}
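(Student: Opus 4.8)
The plan is a short argument by contradiction that exploits the primeness of $p$ when it is plugged into the second argument of property~\ref{proper.F}. Assume $L$ satisfies \ref{proper.F}, let $p\in L$ be prime, and let $a>p$; suppose towards a contradiction that $a\ne 1$. Then $1\ne a\not\leq p$ (the last relation because $a>p$), so property~\ref{proper.F} applied with this pair — i.e.\ condition~\ref{fof4} of Theorem~\ref{fof} — produces $u,v\in L$ with $u\to a\ne a$, $v\to b\ne b$ for $b=p$, and $u\vee v=1$. The aim is to contradict $u\to a\ne a$ by proving that in fact $u\to a=a$.

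The key steps are two. \emph{Step 1 (pin down $v$):} by \ref{H5} one has $v\wedge(v\to p)=v\wedge p\leq p$, and since $p$ is prime this forces $v\leq p$ or $v\to p\leq p$; the latter would give $v\to p=p$ (using $p\leq v\to p$ from \ref{H3}), contrary to our choice of $v$, so $v\leq p$. Hence $1=u\vee v\leq u\vee p$, i.e.\ $u\vee p=1$. \emph{Step 2 (collapse $u\to a$):} write $w=u\to a$; then $w\wedge u=u\wedge a\leq a$ by \ref{H5}, while trivially $w\wedge p\leq p\leq a$, so frame distributivity gives $w=w\wedge 1=w\wedge(u\vee p)=(w\wedge u)\vee(w\wedge p)\leq a$. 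Since also $a\leq u\to a=w$ by \ref{H3}, we get $u\to a=a$, the desired contradiction. Therefore $a=1$; as $p$ was an arbitrary prime, every prime of $L$ is maximal, i.e.\ $L$ satisfies \ref{proper.T1}.

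The only point requiring a bit of care is Step~1: the observation that, under primeness of $p$, the condition $v\to p\ne p$ is equivalent to $v\leq p$. This is exactly what makes the hypothesis $u\vee v=1$ usable — it upgrades to $u\vee p=1$, and then distributing $u\to a=(u\to a)\wedge(u\vee p)$ together with \ref{H5} forces $u\to a\leq a$. Everything else is routine, and in particular no appeal to the geometric reformulation of \ref{proper.T1} (that $\bl(p)$ be closed for every prime $p$) is needed: the whole proof is a Heyting-algebra computation.
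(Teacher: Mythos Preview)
Your proof is correct and follows essentially the same route as the paper's: apply \ref{proper.F} to the pair $(a,p)$, use primeness of $p$ to convert $v\to p\ne p$ into $v\leq p$ and hence $u\vee p=1$, and then contradict $u\to a\ne a$. The only cosmetic difference is in the final step, where the paper observes $u\vee a\geq u\vee p=1$ and uses $u\to a=(u\vee a)\to a=1\to a=a$ instead of your distributivity computation.
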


\begin{proof}
Let $L$ be a frame and let $p\in L$ be a prime. Assume by contradiction that $p$ is not maximal, i.e., $p\leq a\leq 1$ with $a\not\leq p$ and $a\ne 1$. By hypothesis there exist $u,v\in L$ such that $u\to a\ne a$, $v\to p\ne p$ and $u\vee v=1$.
Now, since $p$ is prime, $v\to p\ne p$ implies $v\leq p$ and so it follows that $u\vee p=1$. In particular, $u\vee a=1$, and so $u\to a= (u\vee a)\to a=1\to a= a$, which yields a contradiction.
\end{proof}

In what follows, we investigate the categorical behaviour of property \ref{proper.F}.

\begin{proposition}{\label{Fishered}}\index{frame!with property \textup{(F)}|textsl}\index{property!F@\textup{(F)}|textsl}\index{property!hereditary~$\sim$|textsl}
Property~\ref{proper.F} is hereditary.
\end{proposition}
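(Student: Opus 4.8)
The plan is to show that every sublocale $S$ of a frame $L$ with property~\ref{proper.F} again has property~\ref{proper.F} (as a frame), the point being that $S$ inherits enough of the structure of $L$ through its associated frame surjection $\nu_S\colon L\to S$.

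First I would record the two facts about sublocales that make this work. On the one hand, arbitrary meets in $S$ are computed as in $L$, and $x\to s\in S$ for every $x\in L$ and $s\in S$; combined with~\eqref{LocM} this shows in particular that, for $s,t\in S$, the element $s\to t$ formed in $L$ already lies in $S$ and is the largest element of $S$ whose meet with $s$ is below $t$ --- that is, the Heyting operation of the frame $S$ is the restriction of that of $L$. On the other hand, $\nu_S$ is a surjective frame homomorphism, so it preserves finite joins and the top element, and $1_S=1_L$; moreover the order of $S$ is the restriction of that of $L$.

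With this in hand the verification is short. Let $a,b\in S$ with $1\ne a\not\leq b$; by the previous paragraph this inequality also holds in $L$, so property~\ref{proper.F} applied in $L$ yields $u',v'\in L$ with $u'\to a\ne a$, $v'\to b\ne b$ and $u'\vee v'=1$. I would then set $u:=\nu_S(u')$ and $v:=\nu_S(v')$, which lie in $S$. By~\eqref{LocM} we get $u\to a=\nu_S(u')\to a=u'\to a\ne a$, and likewise $v\to b\ne b$; while the join of $u$ and $v$ formed in $S$ is $\nu_S(u')\vee\nu_S(v')=\nu_S(u'\vee v')=\nu_S(1)=1$. Hence $S$ has property~\ref{proper.F}. (If $S=\mathsf O$ there is nothing to check, since no $a\in S$ satisfies $1\ne a$.)

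I do not expect a genuine obstacle here: the only thing requiring care is the bookkeeping of which lattice the operations are computed in --- applying~\eqref{LocM} with its second argument inside $S$, and keeping in mind that $u\vee v$ must be formed in $S$ (where it equals $\nu_S$ of the corresponding join in $L$) rather than in $L$. Everything else is immediate from $\nu_S$ being a surjective frame homomorphism and from the inheritance of meets and of the Heyting structure by sublocales.
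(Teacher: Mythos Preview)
Your proof is correct and follows essentially the same route as the paper's: apply property~\ref{proper.F} in $L$ to $a,b\in S$, push the witnesses $u',v'$ forward along $\nu_S$, use \eqref{LocM} to keep the Heyting conditions, and use that $\nu_S$ preserves joins to get $u\vee^{S} v=1$. The only difference is cosmetic (naming of primed versus unprimed variables and your extra paragraph making explicit why the Heyting operation and order on $S$ agree with those of $L$).
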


\begin{proof}
Let $L$ be a frame which has property~\ref{proper.F} and let $S\subseteq L$ be a sublocale with corresponding surjection $\nu_S\colon L\twoheadrightarrow S$. We denote by $\vee^S$ (resp. $\vee$) the join in $S$ (resp. $L$). Let $a,b\in S$ such that $1\ne a\not\leq b$. Since $L$ satisfies property~\ref{proper.F}, there exist $u,v\in L$ such that $u\to a\ne a$, $v\to b\ne b$ and $u\vee v=1$. Let $u'=\nu_S(u)$ and $v'=\nu_S(v)$. Then $u'\vee^S v'=\nu_S(u\vee v)=1$. Moreover, by  \eqref{LocM} one readily checks that $u'\to a\ne a$ and $v'\to b\ne b$, hence $u',v'\in S$ yield the required elements for property \ref{proper.F} in $S$.
\end{proof}

\begin{proposition}{\label{productsF}}\index{frame!with property \textup{(F)}|textsl}\index{property!F@\textup{(F)}|textsl}
Arbitrary products of locales with property~\ref{proper.F} also have property~\ref{proper.F}.
\end{proposition}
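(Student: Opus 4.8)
The plan is to argue directly in an arbitrary coproduct $L=\bigoplus_{i\in I}L_i$, using only that $L$ is generated as a frame by the elements $\iota_i(a)$ with $i\in I$ and $a\in L_i$ --- so that every element of $L$ is a join of \emph{basic opens} $\bigwedge_{i\in F}\iota_i(a_i)$ with $F\subseteq I$ finite --- together with the fact that $\pi_i$ is right adjoint to $\iota_i$ (cf. the product construction in the preliminaries), which gives $\iota_i(\pi_i(D))\leq D$ and $\iota_i(x)\leq D\iff x\leq\pi_i(D)$ for all $D\in L$, $x\in L_i$. So suppose $1\ne C\not\leq D$ in $L$. As $C$ is the join of the basic opens below it, there is a basic open $\gamma=\bigwedge_{i\in F}\iota_i(c_i)$ with $\gamma\leq C$ and $\gamma\not\leq D$; necessarily every $c_i\ne 0$ (else $\gamma=0\leq D$), and discarding the indices with $c_i=1$ we may assume $F\ne\varnothing$ (otherwise $\gamma=1$ forces $C=1$).

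The key preparatory step is a \emph{maximisation of one coordinate}: fix $i_0\in F$, set $\rho=\bigwedge_{i\in F\setminus\{i_0\}}\iota_i(c_i)$, and replace $c_{i_0}$ by $\tbigvee\set{x\in L_{i_0}\mid \iota_{i_0}(x)\wedge\rho\leq C}$. Since $\iota_{i_0}$ preserves joins, the new $\gamma=\iota_{i_0}(c_{i_0})\wedge\rho$ is still $\leq C$ and, being above the old one, still $\not\leq D$; moreover now $\iota_{i_0}(x)\wedge\rho\leq C$ holds \emph{if and only if} $x\leq c_{i_0}$. If this maximised $c_{i_0}$ equals $1$, then $\gamma=\rho$ has strictly fewer factors, so we repeat with $\rho$ and $F\setminus\{i_0\}$; since $F\setminus\{i_0\}=\varnothing$ would force $C=1$, after finitely many steps we reach a basic open $\gamma=\iota_{i_0}(c_{i_0})\wedge\rho\leq C$ with $\gamma\not\leq D$, with $c_{i_0}\ne 0,1$, and with the displayed maximality property. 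Finally $c_{i_0}\not\leq\pi_{i_0}(D)$, since otherwise $\iota_{i_0}(c_{i_0})\leq\iota_{i_0}(\pi_{i_0}(D))\leq D$ would give $\gamma\leq D$.

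Now apply property~\ref{proper.F} of $L_{i_0}$ to the pair $1\ne c_{i_0}\not\leq\pi_{i_0}(D)$: there are $p,q\in L_{i_0}$ with $p\to c_{i_0}\ne c_{i_0}$, $q\to\pi_{i_0}(D)\ne\pi_{i_0}(D)$ and $p\vee q=1$. Put $U=\iota_{i_0}(p)$ and $V=\iota_{i_0}(q)$, so that $U\vee V=\iota_{i_0}(p\vee q)=1$. Writing $w=p\to c_{i_0}$, we have $w\not\leq c_{i_0}$ (as $c_{i_0}\leq p\to c_{i_0}$ and they differ) and, by~\ref{H5}, $p\wedge w\leq c_{i_0}$; hence the basic open $\beta:=\iota_{i_0}(w)\wedge\rho$ satisfies $U\wedge\beta=\iota_{i_0}(p\wedge w)\wedge\rho\leq\iota_{i_0}(c_{i_0})\wedge\rho=\gamma\leq C$, so $\beta\leq U\to C$, while $\beta\not\leq C$ by the maximality property (as $w\not\leq c_{i_0}$); since $C\leq U\to C$ always, this gives $U\to C\ne C$. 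Symmetrically, with $w'=q\to\pi_{i_0}(D)$ one has $w'\not\leq\pi_{i_0}(D)$ and $q\wedge w'\leq\pi_{i_0}(D)$, whence $V\wedge\iota_{i_0}(w')=\iota_{i_0}(q\wedge w')\leq\iota_{i_0}(\pi_{i_0}(D))\leq D$, so $\iota_{i_0}(w')\leq V\to D$, whereas $\iota_{i_0}(w')\not\leq D$ because $w'\not\leq\pi_{i_0}(D)$; hence $V\to D\ne D$. Thus $U,V$ witness property~\ref{proper.F} for the pair $(C,D)$ in $L$. I expect the only genuine obstacle to be the maximisation step: without first enlarging the distinguished coordinate of $\gamma$, the inequality $w\not\leq c_{i_0}$ inside $L_{i_0}$ need not transfer to $\beta\not\leq C$ inside $L$, and the argument breaks down; everything else is routine manipulation of the frame operations in the coproduct.
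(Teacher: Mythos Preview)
Your argument is correct and follows the same overall strategy as the paper's proof: locate a single coordinate $i_0$ at which the situation $1\ne C\not\leq D$ can be ``localised'', apply property~\ref{proper.F} in $L_{i_0}$, and push the resulting witnesses back along $\iota_{i_0}$. The paper carries this out in the explicit cp-ideal description: it picks a tuple $\pmb{a}\in V\setminus W$, walks its non-$1$ coordinates to $1$ one at a time until the tuple falls out of $V$, and then works with the sections $v=\tbigvee\set{x\mid\pmb{x}\in V}$ and $w=\tbigvee\set{x\mid\pmb{x}\in W}$ through the same remaining factors; your iterative maximisation is exactly this walk rephrased, and the paper's $v$ is precisely your final $c_{i_0}$. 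The one genuine difference is on the $D$-side: instead of the section $w$, you use $\pi_{i_0}(D)$ directly, which buys you the two needed facts ($c_{i_0}\not\leq\pi_{i_0}(D)$ and $\iota_{i_0}(w')\not\leq D$) immediately from the adjunction $\iota_{i_0}\dashv\pi_{i_0}$, whereas the paper must argue separately that $v\not\leq w$ and that $\pmb{c}\in V$ forces $c\leq v$. This is a mild but pleasant streamlining of the same proof.
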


\begin{proof}
The first part of the proof follows the same lines of that of \cite[Lemma~1.9]{paseka}, cf. also \cite[p.~45]{separation}. Let $\{L_i\}_{i\in I}$ be a family of frames satisfying property~\ref{proper.F}.
Let $1\ne V\not\subseteq W$ in $\tbigoplus_{i\in I}L_i$. Pick $\pmb{a}=(a_i)_{i\in I}\in V-W$. Let $\set{i_1,\dots,i_n}$ be the set of indices such that $a_{i_j}\ne 1$ for all $j=1,\dots,n$. Let $\pmb{a}^{(0)}:=\pmb{a}$ and for each $j=1,\dots,n$, let $\pmb{a}^{(j)}$ be the element $\pmb{a}$ but with all the entries in $i_1,\dots,i_j$ replaced by $1$. Since $\pmb{a}^{(0)}=\pmb{a}\in V$ and $\pmb{a}^{(n)}=(1)_{i\in I}\not\in V$, there is an $j_0\in\set{1,\dots,n}$ such that $\pmb{a}^{(j_0-1)}\in V$ but $\pmb{a}^{(j_0)}\not\in V$. 

For each $x\in L_{i_{j_0}}$ let $\pmb{x}$ be $\pmb{a}^{(j_0)}$ but with the $1$ in position $i_{j_0}$ replaced by $x$.
Further, let $v:=\tbigvee\set{x\in L_{i_{j_0}}\mid \pmb{x}\in V}$ and $w:=\tbigvee\set{x\in L_{i_{j_0}}\mid \pmb{x}\in W}$. Because $V$ and $W$ are $cp$-ideals, one has $\pmb{v}\in V$ and $\pmb{w}\in W$. If $v=1$, then $\pmb{a}^{(j_0)}=\pmb{v}\in V$, a contradiction. Thus $v\ne 1$. Assume $v\leq w$.
 Now, since $\pmb{a_{i_{j_0}}}=\pmb{a}^{(j_0-1)}\in V$, it follows that $a_{i_{j_0}}\leq v\leq w$, and so $\pmb{a}\leq \pmb{w}\in W$. Since $W$ is a downset, it follows that $\pmb{a}\in W$, a contradiction. Hence $1\ne v\not\leq w$.
 
Since $L_{i_{j_0}}$ satisfies property \ref{proper.F} there are $x,y\in L_{i_{j_0}}$, with $x\to v\not\leq v$, $y\to w\not\leq w$ and $x\vee y=1$.
 
 Let $x_i=1=y_i\in L_i$ for each $i\ne i_{j_0}$ and let $x_{i_{j_0}}=x$, $y_{i_{j_0}}=y$. Then obviously $(\oplus_i x_i)\vee(\oplus_i y_i)=1$.
 
We claim that $(\oplus_i x_i)\to V\ne V$. Assume otherwise, by contradiction. Since $x\to v\not\leq v$, there is a $c\in L_{i_{j_0}}$ such that $c\leq x\to v$ (i.e., $c\wedge x\leq v$) and $c\not\leq v$.
 One obviously has $\pmb{c}\wedge (x_i)_i = \pmb{c\wedge x}\leq \pmb{v}\in V$ and since $V$ is a downset, we deduce that $\pmb{c}\wedge (x_i)_i\in V $. It follows that $\pmb{c}\in (\oplus_i x_i)\to V= V$. But if $\pmb{c}\in V$, one has by definition of $v$ that $c\leq v$, a contradiction. The fact that $(\oplus y_i)\to W\ne W$ may be shown similarly.  We have thus verified that $\tbigoplus_{i\in I}L_i$ satisfies property~\ref{proper.F}.
 \end{proof}

By a standard category theory result (see e.g. \cite[Theorem~16.8]{joy}), Propositions~\ref{Fishered} and \ref{productsF} imply the following: 

\begin{corollary}\index{frame!with property \textup{(F)}|textsl}\index{property!F@\textup{(F)}|textsl}
Locales satisfying property~\ref{proper.F}\index{subcategory!epireflective~$\sim$|textsl} are epireflective in the category of locales.
\end{corollary}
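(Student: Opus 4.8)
The plan is to obtain the statement as an immediate application of the standard characterisation of epireflective subcategories: a full, isomorphism-closed subcategory of a complete, well-powered category is epireflective as soon as it is closed under the formation of arbitrary products and of extremal subobjects --- this is precisely \cite[Theorem~16.8]{joy}, the result alluded to above. Write $\mathcal{A}$ for the full subcategory of \cat{Loc} spanned by the locales satisfying \ref{proper.F}. Since \ref{proper.F} is formulated purely in terms of the lattice-theoretic structure of the frame, it is invariant under frame isomorphism, so $\mathcal{A}$ is replete in \cat{Loc}; thus the only substantive input will be the two closure conditions, already established.

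First I would record that \cat{Loc} carries the ambient structure required by the cited theorem. It is complete: products of locales are coproducts of frames (recalled, for two factors, in the Preliminaries, the general case being analogous), and equalisers of localic maps exist (cf. the sublocales $\mathsf{equ}(f,g)$ of Proposition~\ref{categprop.F.sep}). Moreover, for every locale $L$ the sublocales form the set $\SS(L)$, and --- as recalled in Subsection~\ref{subsect.sublocales} --- these sublocales are exactly the extremal (equivalently regular) subobjects of $L$ in \cat{Loc}. This supplies the well-poweredness with respect to extremal subobjects that the theorem needs, and it identifies ``closed under extremal subobjects'' with ``closed under sublocales'', i.e. \emph{hereditary}.

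With these identifications in place, the two hypotheses of the theorem are nothing but the two preceding results: closure of $\mathcal{A}$ under extremal subobjects is the heredity of property \ref{proper.F} (Proposition~\ref{Fishered}), and closure of $\mathcal{A}$ under arbitrary products is Proposition~\ref{productsF}. Hence \cite[Theorem~16.8]{joy} applies and shows $\mathcal{A}$ to be epireflective in \cat{Loc}; equivalently, every locale admits a universal map onto a locale with property \ref{proper.F}, and this reflection map is an epimorphism.

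I do not expect any genuine difficulty: the argument is a bookkeeping matter of matching the localic notions (sublocale, localic product) with the categorical ones (extremal subobject, product) and then quoting the general theorem. The only point that deserves an explicit line is the classical identification of sublocales with the extremal subobjects of \cat{Loc}, together with the observation that $\SS(L)$ is a set --- this is the well-poweredness that keeps the general theorem applicable.
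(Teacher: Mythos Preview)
Your proposal is correct and follows exactly the paper's own route: the corollary is deduced directly from \cite[Theorem~16.8]{joy} using Propositions~\ref{Fishered} and~\ref{productsF} as the two closure hypotheses. Your additional remarks on completeness, well-poweredness, and the identification of extremal subobjects with sublocales merely spell out the ambient verifications that the paper leaves implicit.
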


Hence, property \ref{proper.F} is a separation property which is well-behaved categorically, which lies between  property \ref{proper.T1} and $\mathcal{F}$-separatedness, and it can be described by a first-order formula. Hence it seems to  deserve some further investigation, and this is what we do in the subsequent subsection.

%

\subsection{Closed points, fitted points and properties (H), (F), and ($T_U$)}

Recall from Proposition~\ref{FimpliesT1} that property \ref{proper.F} implies property \ref{proper.T1}. On the other hand, it is well known that properties \ref{proper.H} or \ref{proper.TU} also imply property \ref{proper.T1} (see \cite[IV.3.3.2]{separation} and \cite[IV.1.3]{separation} respectively). In this context, it is also natural to consider a ``dual'' of \ref{proper.T1} as follows:

\begin{definition}
A frame has \emph{fitted points} (briefly, it satisfies \ref{proper.ptfit}) if for any prime $p\in L$, the sublocale $\bl(p)$ is fitted --- i.e., if
\begin{equation}{\label{proper.ptfit}}
\text{for any prime } p\in L,\ \bl(p)=\tbigcap\set{ \op(a)\mid p\in\op(a)}. \tag*{\textup{(pt-fit)}}
\end{equation}

\end{definition}

\begin{remarks}
\begin{enumerate}[label=\textup{(\roman*)},leftmargin=1.5\parindent]
\item We recall here that a topological space is $T_1$ if and only if each singleton is saturated. In this sense, \ref{proper.ptfit} is also an axiom of $T_1$-type, which plays the role of the counterpart of the localic property \ref{proper.T1} in the closed-fitted duality.
\item For topological spaces, all the singletons being closed is equivalent to all the singletons being saturated. However, in the localic setting it turns out that \ref{proper.ptfit} and \ref{proper.T1} are not comparable. To see this, let $L$ be a pointless locale and denote by $L_{*}$ (resp. $L^{*}$) the locale obtained by adjoining a new bottom (resp. top) element to $L$. It is then readily verified that $L_*$ satisfies \ref{proper.ptfit} but not \ref{proper.T1}; and similarly $L^*$  satisfies \ref{proper.T1} but not \ref{proper.ptfit}.
\end{enumerate}
\end{remarks}

The following result indicates that properties \ref{proper.H}, \ref{proper.F} and \ref{proper.TU} have a similar behaviour w.r.t  $T_1$-locales and w.r.t. locales with fitted points:

\begin{proposition} 
Each of the properties \ref{proper.H}, \ref{proper.F} and  \ref{proper.TU}  implies \linebreak property~\ref{proper.ptfit}.
\end{proposition}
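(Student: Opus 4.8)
My plan is to reduce all three implications to one convenient description of when a one-point sublocale is fitted, and then to dispatch the cases separately — (H) and (F) by direct element manipulation and \ref{proper.TU} by a comparison of frame homomorphisms. First I would establish the criterion: \emph{for a prime $p$, the sublocale $\bl(p)$ is fitted if and only if for every $x<p$ there is an $a\in L$ with $a\not\leq p$ and $a\to x\neq x$}. Indeed, an open sublocale $\op(a)$ contains $\bl(p)$ exactly when $a\to p=p$, which for a prime $p$ is the same as $a\not\leq p$; since $\op(a)=\{x\in L\mid a\to x=x\}$ as a subset of $L$ and intersections of sublocales are computed setwise, the fitting of $\bl(p)$ is $\{x\in L\mid a\to x=x\text{ for all }a\not\leq p\}$. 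This set always contains $p$ and $1$, and any $x\neq 1$ with $x\not\leq p$ is excluded by taking $a:=x$ (so $a\to x=1\neq x$); this leaves precisely the elements $x<p$ to be ruled out. (It is also true that each of \ref{proper.H}, \ref{proper.F} and \ref{proper.TU} implies \ref{proper.T1}, so $p$ is maximal and $\bl(p)=\cl(p)$; invoking this streamlines the bookkeeping but is not needed.)

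For \ref{proper.F}: given $x<p$, since $p\neq 1$ and $p\not\leq x$ I would apply the equivalent form \ref{fof4} to the pair $(p,x)$, obtaining $u,v$ with $u\to p\neq p$, $v\to x\neq x$ and $u\vee v=1$. As $u\wedge(u\to p)=u\wedge p\leq p$ by \ref{H5} while $u\to p\not\leq p$, primality of $p$ forces $u\leq p$; hence $p\vee v=1$, so $v\not\leq p$, and $a:=v$ is the required witness. For \ref{proper.H}: given $x<p$, applying \ref{proper.H} to $(p,x)$ yields $u,v$ with $u\not\leq p$, $v\not\leq x$ and $u\wedge v=0$; then $v\leq u^{*}\leq u\to x$, so $u\to x=x$ would force $v\leq x$, a contradiction, and $a:=u$ works. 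These two arguments are mirror images of one another.

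For \ref{proper.TU} I would argue by contradiction: assume that for some $x<p$ one has $a\to x=x$ for all $a\not\leq p$, and exhibit two distinct comparable frame homomorphisms. Let $M$ be the Booleanization $\mathfrak{B}(\cl(x))$ of the closed sublocale $\cl(x)$, realized as a sublocale of $L$; its associated frame surjection is $\nu_M\colon L\to M$ with $\nu_M(t)=(t\to x)\to x$, with least element $x$ and greatest element $1$. Since $p$ is prime, the point $\hat p\colon L\to\mathbf 2$ given by $\hat p(t)=0\iff t\leq p$ is a frame homomorphism; composing it with the inclusion of the two-element subframe $\{x,1\}\subseteq M$ (identifying $\mathbf 2$ with $\{x,1\}$) gives a frame homomorphism $h\colon L\to M$ with $h(t)=x$ for $t\leq p$ and $h(t)=1$ for $t\not\leq p$. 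Then $h\leq\nu_M$: for $t\leq p$ because $h(t)=x\leq(t\to x)\to x$ by \ref{H3}, and for $t\not\leq p$ because the hypothesis gives $\nu_M(t)=(t\to x)\to x=x\to x=1=h(t)$. But $h\neq\nu_M$, since $h(p)=x$ whereas $\nu_M(p)=(p\to x)\to x\geq p>x$ by \ref{H9}. This contradicts \ref{proper.TU}, so $\bl(p)$ is fitted.

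The main obstacle will be the \ref{proper.TU} case: because total unorderedness is not a first-order, element-level statement, one cannot simply manipulate inequalities, and the crux is to recognize that the failure of fittedness of $\bl(p)$ supplies exactly the data needed to push the point $\hat p$ strictly below the double-negation-style surjection onto $\mathfrak{B}(\cl(x))$. Everything else reduces to routine Heyting-algebra computations, most of which are absorbed into the criterion of the first step.
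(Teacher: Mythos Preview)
Your proof is correct. The element-level criterion you set up at the start is exactly what the paper uses implicitly in its (F) argument, and your treatments of (F) and (H) match the paper's (the paper spells out only (F) and leaves (H) as ``similarly''; you give both, which is fine).

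For \ref{proper.TU} you take a genuinely different route. The paper works directly with the fitting $S=\bigcap_{p\in\op(a)}\op(a)$: it compares the sublocale embedding $j\colon S\hookrightarrow L$ with the localic map $f\colon S\twoheadrightarrow\bl(p)\hookrightarrow L$ obtained by collapsing $S$ onto the terminal locale and including, observes that $j\le f$ (because any $a<1$ in $S$ must lie below $p$), and concludes $j=f$, hence $S=\bl(p)$. Your argument instead fixes a specific obstructing element $x<p$ and exhibits two comparable but distinct frame homomorphisms $L\to\mathfrak B(\cl(x))$, namely the double-negation surjection $t\mapsto(t\to x)\to x$ and the point map $h$ valued in $\{x,1\}$. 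The paper's version is a bit more economical---it needs no auxiliary Booleanization and no choice of $x$, and it reads off the conclusion $S=\bl(p)$ directly---whereas yours is more explicitly element-based and makes the role of the ``bad'' $x$ transparent. Both approaches are sound and roughly equal in length.
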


\begin{proof}
Let us first assume that $L$ satisfies \ref{proper.TU}.  Let $p$ be a prime, set $S:=\tbigcap_{p\in\op(a)}\op(a)$, and denote by $f$ be the composite localic map \[\begin{tikzcd}
	S & {\bl(p)} & L
	\arrow[two heads, from=1-1, to=1-2]
	\arrow[hook, from=1-2, to=1-3]
\end{tikzcd}\]
where the first map is the unique surjection onto the terminal locale --- i.e., $f$ is given by $f(a)=p$ for all $a<1$ and $f(1)=1$. Moreover, let $j$ be the embedding of $S$ in $L$. We have $j\leq f$.  Indeed, if $a<1$ in $S$, we have to check that $a\leq p$. By contradiction, if $a\not\leq p$, then we have $a\in S\subseteq \op(a)$, which implies $a=1$, a contradiction. Hence $j\leq f$, and  so by property \ref{proper.TU}  it follows that $j=f$, which in turn implies $S=\bl(p)$.

Assume now that $L$ satisfies property \ref{proper.F}. Let  $p$ be a prime. We need to show that $\tbigcap_{p\in\op(a)}\op(a)\subseteq \bl(p)$. Hence let $b\ne 1$ such that $b\in\op(a)$ whenever $p\in\op(a)$ --- i.e., such that $a\to b\leq b$ whenever $a\not\leq p$. We need to show that $b=p$. Note that $b\leq p$ (if $b\not\leq p$, then $1=b\to b\leq b$, a contradiction). Now, if $p\not\leq b$, by property \ref{proper.F}, there are $u,v\in L$ such that $u\to p\ne p$, $v\to b\ne b$ and $u\vee v=1$. Now, $u\to p\ne p$ means $u\leq p$ and so $p\vee v=1$. In particular $v\not\leq p$. It follows that $v\to b\leq b$, a contradiction.

The fact that property \ref{proper.H} implies \ref{proper.ptfit} can be dealt with similarly.
\end{proof}

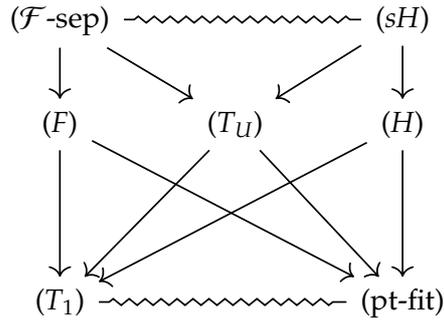
\begin{figure}[h]
\[\begin{tikzcd}
	{(\mathcal{F}\text{-sep})} && {(sH)} \\
	{(F)} & {(T_U)} & {(H)} \\
	{} \\
	{(T_1)} && {\text{(pt-fit)}}
	\arrow[from=1-1, to=2-1]
	\arrow[from=1-3, to=2-3]
	\arrow[from=2-1, to=4-1]
	\arrow[from=2-1, to=4-3]
	\arrow[from=2-3, to=4-3]
	\arrow[from=2-3, to=4-1]
	\arrow[from=1-1, to=2-2]
	\arrow[from=1-3, to=2-2]
	\arrow[from=2-2, to=4-1]
	\arrow[from=2-2, to=4-3]
	\arrow[squiggly, no head, from=1-1, to=1-3]
	\arrow[squiggly, no head, from=4-1, to=4-3]
\end{tikzcd}\]
\caption{Relations between properties discussed in this subsection}
\label{fig.ptfit}
\end{figure}

\subsection{Property (F) is not comparable with other weakenings of fitness}
\label{sec.weakenings}

Since property \ref{proper.F} is weaker than fitness, one may wonder whether it is comparable with any of the other relaxations of fitness that appear in the literature --- notably, subfitness, weak subfitness and prefitness \cite{subfit2015} (see also Subsection~\ref{subsection.standard.sep}). It is not, as the following observations show:
\begin{enumerate}[label       = \textbullet,
                  labelindent = 0\parindent,
                  leftmargin  = \parindent,
                  labelsep    = *,
                  topsep      = 10pt,
                  itemsep      = 5pt]
                 \item Property \ref{proper.F} does not imply any of subfitness, weak subfitness and prefitness.  Indeed, property \ref{proper.F} is hereditary and since hereditary subfitness, weak subfitness and prefitness are all equivalent to fitness (see \cite[Prop.~6]{subfit2015},  \cite[Cor.~1]{subfit2015} and  \cite[Prop.~9]{subfit2015} respectively); we would have that property \ref{proper.F} implies fitness --- but even $\mathcal{F}$-separatedness does not imply fitness (recall Theorem~\ref{fitnessimpliesfsep}).
\item Prefit does not imply property \ref{proper.F}: by Proposition~\ref{FimpliesT1} we would have that prefitness implies \ref{proper.T1}, but this is not true by \cite[Example~5.2]{subfit2015}.
\item Subfitness (and so weak subfitness) does not imply property \ref{proper.F}: If  subfitness implies property \ref{proper.F}, then by Proposition~\ref{FimpliesT1} it also implies property \ref{proper.T1}, but  it is well known this does not hold in general. 
\end{enumerate}

\section{The strong Hausdorff property does not imply property (F)}

%
We begin by a generalization of a result due to Banaschewski (cf. \cite[Prop.~A.6]{singly}) to a context not restricted to simple extensions:

\begin{proposition}\label{st.haus.sep}
Let $X$ be a topological space, $U$ an open subspace and $F$ its closed complement. Suppose that
\begin{enumerate}[label       = \textbullet,
                  labelindent = 0\parindent,
                  leftmargin  = \parindent,
                  labelsep    = *,
                  topsep      = 10pt,
                  itemsep      = 5pt]\item $X$ is Hausdorff,
\item The subspace topology on $U$ is strongly Hausdorff,
\item The subspace topology on $F$ is a spatial multiplier --- i.e., $\Omega(F)\oplus \Omega(Z)\cong \Omega(F\times Z)$ for any space $Z$.
\end{enumerate}
Then the topology of $X$ is strongly Hausdorff.
\end{proposition}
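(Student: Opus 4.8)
The plan is to prove directly that $\Omega(X)$ is strongly Hausdorff, i.e.\ that the diagonal sublocale $D_L$ is closed in $M:=L\oplus L$, where $L:=\Omega(X)$. Since $\overline{D_L}=\cl(d_L)$ with $d_L=\{(a,b)\in L\times L\mid a\wedge b=0\}$, and $D_L\subseteq\overline{D_L}$ is automatic, the task reduces to showing $\cl(d_L)\subseteq D_L$. I would first record two consequences of the hypotheses to be used throughout: Hausdorffness of $X$ means exactly that $d_L$ maps, under the comparison surjection $M\twoheadrightarrow\Omega(X\times X)$, to the open complement of the topological diagonal $\Delta_X$; and $F$, being a subspace of the Hausdorff space $X$, is Hausdorff, hence --- by the multiplier hypothesis $\Omega(F)\oplus\Omega(F)\cong\Omega(F\times F)$, under which $D_{\Omega(F)}$ corresponds to $\Omega(\Delta_F)$ --- is in fact \emph{strongly} Hausdorff.

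Write $u\in L$ for the open set $U$ and decompose $M$ along $u$: let $P:=\op_M((u\oplus1)\vee(1\oplus u))$, the open sublocale corresponding to $U\times X\cup X\times U$, with complementary closed sublocale $C:=\cl_M((u\oplus1)\vee(1\oplus u))$, corresponding to $F\times F$. By standard identities for coproducts of frames, $C\cong\cl_L(u)\oplus\cl_L(u)\cong\Omega(F)\oplus\Omega(F)$, and by the multiplier $C\cong\Omega(F\times F)$; also $\op_M(u\oplus u)=\op_M(u\oplus1)\cap\op_M(1\oplus u)\cong\Omega(U)\oplus\Omega(U)$ (a genuine isomorphism, since $U$ is open). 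Moreover $D_L\cap P=D_L\cap\op_M(u\oplus u)$ is the diagonal over $U$, call it $\Delta_U$, which under $\op_M(u\oplus u)\cong\Omega(U)\oplus\Omega(U)$ is $D_{\Omega(U)}$, while $D_L\cap C$ is, under $C\cong\Omega(F\times F)$, the topological diagonal $\Delta_F$; hence $D_L=\Delta_U\vee(D_L\cap C)$. Since localic closure commutes with finite joins and with restriction to open sublocales, $\overline{D_L}=\overline{\Delta_U}\vee\overline{D_L\cap C}$ (closures in $M$), and it suffices to show each of $\overline{\Delta_U}$ and $\overline{D_L\cap C}$ is contained in $D_L$. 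For the closed part, $\overline{D_L\cap C}$ is computed inside the closed sublocale $C$, where it equals $\overline{\Delta_F}$; by strong Hausdorffness of $F$ --- equivalently, by the computation that through $M\to\Omega(X\times X)\to\Omega(F\times F)\cong\Omega(F)\oplus\Omega(F)$ the relevant restriction of $d_L$ is exactly $d_{\Omega(F)}$ (this is where Hausdorffness of $X$, Hausdorffness of $F$, and the multiplier all enter) --- this is $\Delta_F=D_L\cap C\subseteq D_L$. For the open part, $\Delta_U$ sits inside the open sub-sublocale $\op_M(u\oplus u)\cong\Omega(U)\oplus\Omega(U)$, on which $U$ being strongly Hausdorff makes $\Delta_U=D_{\Omega(U)}$ closed, so $\overline{\Delta_U}\cap\op_M(u\oplus u)=\Delta_U$; it remains to check that the closure of $\Delta_U$ does not ``leak'' into the locally closed region $\cl_M(u\oplus u)\cap P$ (the ``seam'' $U\times F\cup F\times U$), which contains no diagonal points since $U\cap F=\emptyset$. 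Concretely one computes $\bigwedge_M\Delta_U=(1_L,1_L)(u^{*})$ and must deduce from this, together with Hausdorffness of $X$, that $\overline{\Delta_U}$ meets the seam only in $\mathsf O$, whence $\overline{\Delta_U}=\Delta_U\subseteq D_L$.

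I expect this last point --- the point-free verification that the closure of the open diagonal $\Delta_U$ spills over only into the closed corner $C$ and never into the mixed ``seam'' --- to be the main obstacle. Geometrically it is transparent ($\overline{\Delta_U}$ corresponds to the diagonal over $\overline U$, and $\overline U\cap F=\partial U$ lies wholly inside the $F\times F$ corner), but the rigorous argument must rule out non-spatial contributions, and this is precisely where the three hypotheses have to be balanced against one another: Hausdorffness of $X$ for the mixed corners, strong Hausdorffness of $U$ for the $U\times U$ corner, and the multiplier property (plus Hausdorffness) for the $F\times F$ corner. Assembling the two parts then gives $\overline{D_L}=\overline{\Delta_U}\vee\overline{D_L\cap C}\subseteq D_L$, as required. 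An equivalent way to organise everything is to work with cp-ideals directly: one shows that any cp-ideal $D\supseteq d_L$ equals $(1_L,1_L)(c)$ for $c:=\bigvee\{a\wedge b\mid(a,b)\in D\}$ by analysing the restrictions $(r_U\oplus r_U)(D)\supseteq d_{\Omega(U)}$ and $(r_F\oplus r_F)(D)\supseteq d_{\Omega(F)}$ (with $r_U,r_F$ the restriction maps, the parameters coming out as $c\wedge u$ and $c\cap F$) and the two mixed restrictions, which Hausdorffness of $X$ forces to be the top of their corners, and then patching --- the patching being the same obstacle in a different guise.
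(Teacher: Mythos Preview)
Your four-corner picture is exactly the right geometry, and it is also what the paper uses. But the execution has one incorrect target and one unresolved step.

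The incorrect target: from the seam claim you want to conclude $\overline{\Delta_U}=\Delta_U$. This is false in general. Even pointwise, the closure of the diagonal over $U$ is the diagonal over $\overline U$; whenever $U$ is not clopen this meets the closed corner $C$ nontrivially (in the diagonal over $\partial U$). The most you can hope for is $\overline{\Delta_U}\subseteq D_L$, which would additionally require $\overline{\Delta_U}\cap C\subseteq D_L\cap C$ --- a point you never address. The unresolved step is the seam claim itself: you correctly flag $\overline{\Delta_U}\cap(\cl_M(u\oplus u)\cap P)=\mathsf O$ as the crux but do not prove it, and your cp-ideal reformulation at the end has, as you say, the same patching difficulty in a different guise.

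The paper sidesteps both problems by two choices. First, instead of manipulating the diagonal sublocale and its closure, it uses the equivalent element-level characterisation: $L$ is strongly Hausdorff iff $d_L\vee(1\oplus V)=d_L\vee(V\oplus 1)$ for every $V\in L$. Second, instead of your two-piece (open/closed) decomposition followed by an attempted split of the open piece, it passes to the four jointly monic frame surjections
\[
\alpha_1\colon L\oplus L\to{\downarrow}u\oplus{\downarrow}u,\quad
\alpha_2\colon L\oplus L\to{\uparrow}u\oplus{\downarrow}u,\quad
\alpha_3\colon L\oplus L\to{\downarrow}u\oplus{\uparrow}u,\quad
\alpha_4\colon L\oplus L\to{\uparrow}u\oplus{\uparrow}u.
\]
Joint monomorphy means the displayed identity need only be verified after applying each $\alpha_i$ separately --- no patching or seam argument is ever required. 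The computation that replaces your seam obstacle is simply that $\alpha_2(d_L)=1=\alpha_3(d_L)$: the multiplier hypothesis makes ${\uparrow}u\oplus{\downarrow}u\cong\Omega(F\times U)$ spatial, and then Hausdorffness of $X$ lets one argue pointwise that every $(x,y)\in F\times U$ is separated. Once the mixed images are top, the identity is automatic there; the $\alpha_1$ case is exactly strong Hausdorffness of $U$ (since $\alpha_1(d_L)=d_{\Omega(U)}$), and the $\alpha_4$ case becomes a direct pointwise check in $\Omega(F\times F)$, again via the multiplier.
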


\begin{proof}
Given a frame $L$ and $a\in L$ we recall \cite[Lemma~A.5]{singly} that the maps 
$$\alpha_1\colon L\oplus L\to \newdownarrow a \oplus \newdownarrow a,\qquad \alpha_1(b\oplus c)=(b\wedge a)\oplus (c\wedge a),$$
$$\alpha_2\colon L\oplus L\to \newuparrow a \oplus \newdownarrow a,\qquad \alpha_2(b\oplus c)=(b\vee a)\oplus (c\wedge a),$$
$$\alpha_3\colon L\oplus L\to \newdownarrow a \oplus \newuparrow a,\qquad \alpha_3(b\oplus c)=(b\wedge a)\oplus (c\vee a),$$
$$\alpha_4\colon L\oplus L\to \newuparrow a \oplus \newuparrow a,\qquad \alpha_4(b\oplus c)=(b\vee a)\oplus (c\vee  a)$$
are jointly monic. Hence, if we set $L=\Omega(X)$ and $a=U$, we will prove that $X$ is strongly Hausdorff by showing that for any $V\in\Omega(X)$ the equality  $\alpha_i( d_{\Omega(X)}\vee X\oplus V)= \alpha_i( d_{\Omega(X)}\vee V\oplus X)$   (see \cite[Theorem~III.5.3.3\,(b)]{separation}) for all $i=1,2,3,4$, where
$$d_{\Omega(X)}=\tbigvee\set{U_1\oplus U_2\mid U_1,U_2\in\Omega(X),\ U_1\cap U_2=\varnothing}.$$
For that purpose, we first compute $\alpha_i(d_{\Omega(X)})$ for $i=1,2,3,4$:
\begin{enumerate}[label       = \textbullet,
                  labelindent = 0\parindent,
                  leftmargin  = \parindent,
                  labelsep    = *,
                  topsep      = 10pt,
                  itemsep      = 5pt]\item  We have $\alpha_1(d_{\Omega(X)})= \tbigvee\set{(U_1\cap U)\oplus (U_2\cap U)\mid U_1,U_2\in\Omega(X),\ U_1\cap U_2=\varnothing}=d_{\Omega(U)}$ where the last equality holds because $U$ is open.
\item The map $\beta\colon \newuparrow U\oplus \newdownarrow U\to \Omega(F)\oplus \Omega(U)$ given by $\beta(U_1\oplus U_2)=(F\cap U_1)\oplus U_2$  is an isomorphism, and the map $\gamma\colon \Omega(F)\oplus \Omega(U)\to \Omega (F\times U)$ given by $\gamma(U_1\oplus  U_2)=U_1\times U_2$ is also an isomorphism because $F$ is a spatial multiplier.  Accordingly we see that $\gamma\beta\alpha_2(d_{\Omega(X)})=\tbigcup\set{(U_1\cap F)\times U_2\mid U_1,U_2\in\Omega(X),\ U_1\cap U_2=\varnothing}.$
Now, let $x\in F$ and $y\in U$. Then, since $x\ne y$ and $X$ is Hausdorff, there are  $U_1,U_2\in \Omega(X)$ with $x\in U_1$, $y\in U_2$ and $U_1\cap U_2=\varnothing$. Hence $(x,y)\in \gamma\beta\alpha_2(d_{\Omega(X)})$. It follows that $\gamma\beta\alpha_2(d_{\Omega(X)})= F\times U=1_{\Omega(F\times X)}$, and so $\alpha_2(d_{\Omega(X)})=1$.
\item Symmetrically we see that $\alpha_3(d_{\Omega(X)})=1$.
\item Similarly, we see that the map $\beta\colon \newuparrow U\oplus \newuparrow U\to  \Omega(F\times F)$ given by $\beta(U_1\oplus U_2)=(U_1\cap F)\times (U_2\cap F)$ is an isomorphism. Hence, $\beta \alpha_4(d_{\Omega(X)}) = \tbigcup\set{(U_1\times F)\times (U_2\cap F)\mid U_1,U_2\in\Omega(X),\ U_1\cap U_2=\varnothing}$. Since $X$ is Hausdorff it follows readily that  $\beta \alpha_4(d_{\Omega(X)}) =F\times F- D_F$ where $D_F=\set{(x,x)\mid x\in F}$.
\end{enumerate}
It is now clear from this description that $\alpha_i( d_{\Omega(X)}\vee X\oplus V)= \alpha_i( d_{\Omega(X)}\vee V\oplus X)$  for any $i=1,2,3,4$ (in the first case because $U$ is strongly Hausdorff, the second and third case trivially, and the last case by a direct computation).
\end{proof}

\begin{construction}\label{consj} \textup{(}cf. Juhász, Soukup and Szentmiklóssy \cite[Lemma~2.1]{juhasz}\textup{)}. Let $(X,\tau_X)$ be a Hausdorff topological space, $\{K_n\}_{n\in\N}$ a pairwise disjoint family of nonempty compact subsets of $X$ and let $Y:=X-\tbigcup_{n\in \N}K_n$. 

In the  disjoint union $Z:= Y\bigsqcup \N$, the family 
\begin{equation}\label{basis}\beta=\set{ U\cap Y\mid U\in\tau_X\} \cup  \{ (U\cap Y)\cup \{n\} \mid  U\in\tau_X,\  K_n\subseteq U}\end{equation}
is clearly closed under binary intersections --- hence $\beta$ constitutes a base of a topology $\tau_Z$ in $Z$. We note that \begin{enumerate}[label=\textup{(\roman*)},leftmargin=1.5\parindent]
\item\label{comm1}$Y$ is open in $(Z,\tau_Z)$, 
\item\label{comm2} the subspace topologies of $Y$ in $(X,\tau_X)$ and $(Z,\tau_Z)$ agree, and \item\label{comm3} the subspace topology of $\N$ in $(Z,\tau_Z)$ is discrete.\end{enumerate}
Now it is easily verified that $(Z,\tau_Z)$ is Hausdorff (for instance, if $n\ne m$ in $\N$, since $K_n\cap K_m=\varnothing$ and disjoint compact sets in a Hausdorff space can be separated by disjoint open sets, there exist $U,V\in\tau_X$ with $U\cap V=\varnothing$, $K_n\subseteq U$ and $K_m\subseteq V$. Then $U'=\{n\}\cup (U\cap Y)$ and $V'=\{m\}\cup (V\cap Y)$ are the desired disjoint open sets in $(Z,\tau_Z)$ separating $n$ and $m$. The remaining cases are similar).
\end{construction}

From Proposition~\ref{st.haus.sep}, the fact that the strong Hausdorff property is   hereditary,   and the previous comments \ref{comm1}---\ref{comm3} we immediately have (recall that a discrete topology is a spatial multiplier):

\begin{corollary}\label{cor.str.haus}
Let $(Z,\tau_Z)$ be as in Construction~\ref{consj} and suppose additionally that the topology of $(X,\tau_X)$ is strongly Hausdorff. Then the topology of $(Z,\tau_Z)$ is also strongly Hausdorff.
\end{corollary}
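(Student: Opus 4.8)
The plan is to apply Proposition~\ref{st.haus.sep} directly, taking $Z$ in the role of the ambient space ``$X$'', the open subspace $Y$ in the role of ``$U$'', and its closed complement $Z-Y=\N$ in the role of ``$F$''. Note that $\N$ is indeed the closed complement of $Y$, since $Y$ is open in $Z$ by~\ref{comm1}. It then remains only to verify the three hypotheses of that proposition.

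First, $Z$ is Hausdorff: this was already checked in Construction~\ref{consj}. Second, the subspace topology on $Y$ must be strongly Hausdorff; by~\ref{comm2} this subspace topology coincides with the one that $Y$ inherits from $(X,\tau_X)$, and since $X$ is assumed strongly Hausdorff and the strong Hausdorff property is hereditary (recall Proposition~\ref{categprop.F.sep}\,\ref{categprop.F.sep1}), this subspace is strongly Hausdorff. Third, the subspace topology on $\N$ must be a spatial multiplier; by~\ref{comm3} this topology is discrete, and a discrete topology is a spatial multiplier, as recalled just before the statement.

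With all three hypotheses in hand, Proposition~\ref{st.haus.sep} yields that the topology $\tau_Z$ is strongly Hausdorff, which is precisely the claim. There is no genuine obstacle to overcome here: the substantive work is already contained in Proposition~\ref{st.haus.sep} and in the bookkeeping of Construction~\ref{consj}. The only points requiring a moment's care are the correct identification of the roles of $Z$, $Y$ and $\N$ in the statement of Proposition~\ref{st.haus.sep}, the appeal to heredity of strong Hausdorffness for the second hypothesis, and the (standard) fact that discrete spaces are spatial multipliers, which is what makes the third hypothesis hold.
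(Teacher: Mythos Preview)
Your proposal is correct and follows exactly the route the paper indicates: apply Proposition~\ref{st.haus.sep} with $Z$, $Y$, and $\N$ in the roles of $X$, $U$, and $F$, invoking \ref{comm1}--\ref{comm3}, the heredity of the strong Hausdorff property, and the fact that discrete spaces are spatial multipliers. There is nothing to add; this is precisely the paper's argument, only spelled out in more detail.
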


A space is said to be \emph{anti-Urysohn} \cite{juhasz} if for any open sets $U,V\ne\varnothing$, one has $\overline{U}\cap\overline{V}\ne\varnothing$.
The following is proved more generally in \cite{juhasz}:

\begin{proposition}{\label{prop.a.u.}}\textup{(cf. }\cite[Lemma~2.1]{juhasz}\textup{)}
Let $(Z,\tau_Z)$ be as in Construction~\ref{consj}. Suppose additionally that $Y=X-\tbigcup_{n\in\N} K_n$ is dense in $(X,\tau_X)$ and that for every infinite $A\subseteq \N$ the union $\tbigcup_{n\in A} K_n$ is also dense in $(X,\tau_X)$.
Then for every $\varnothing \ne U\in \tau_Z$ there is an $n_U\in\N$ such that $\newuparrow n_U\subseteq \overline{U}^{Z}$. In particular, $(Z,\tau_Z)$ is anti-Urysohn.
\end{proposition}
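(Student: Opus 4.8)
The plan is to reduce the whole statement to a closure computation for the points of $\N\subseteq Z$. First I would note that every nonempty $U\in\tau_Z$ contains a nonempty member of the base $\beta$ from \eqref{basis}, and in either of its two possible forms this member contains a set $W\cap Y$ with $\varnothing\ne W\in\tau_X$: for a member $W\cap Y$ of the first form this is immediate (it is nonempty, hence so is $W$), and for a member $(W\cap Y)\cup\{n\}$ of the second form we have $K_n\subseteq W$, so $W\ne\varnothing$ and then $W\cap Y\ne\varnothing$ since $Y$ is dense in $X$. As $\overline{W\cap Y}^Z\subseteq\overline U^Z$, it suffices to produce the index $n_U$ for the open set $W\cap Y$.

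Next I would pin down exactly which $n\in\N$ lie in $\overline{W\cap Y}^Z$. A neighbourhood base at $n$ in $(Z,\tau_Z)$ is formed by the sets $(V\cap Y)\cup\{n\}$ with $V\in\tau_X$ and $K_n\subseteq V$ (these are precisely the members of $\beta$ containing $n$); since $n\notin Y$, such a set meets $W\cap Y$ iff $V\cap W\cap Y\ne\varnothing$, and by density of $Y$ this is equivalent to $V\cap W\ne\varnothing$. Hence $n\in\overline{W\cap Y}^Z$ iff every open $V\supseteq K_n$ meets $W$. I would then observe that this holds iff $K_n\cap\overline W^X\ne\varnothing$: if $x\in K_n\cap\overline W^X$ then any open $V\supseteq K_n$ is a neighbourhood of $x$ and so meets $W$; conversely, if $K_n\cap\overline W^X=\varnothing$ then $V:=X\setminus\overline W^X$ is an open set containing $K_n$ and disjoint from $W$.

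Now I would invoke the density hypothesis on the $K_n$'s. Set $A:=\{n\in\N:K_n\cap\overline W^X=\varnothing\}$, which by the previous step equals $\{n\in\N:n\notin\overline{W\cap Y}^Z\}$. For $n\in A$ one has $K_n\subseteq X\setminus\overline W^X$, so $\tbigcup_{n\in A}K_n\subseteq X\setminus\overline W^X$, a set disjoint from the nonempty open set $W$ and hence not dense in $X$. Since by hypothesis $\tbigcup_{n\in A}K_n$ would be dense whenever $A$ is infinite, $A$ must be finite; choosing $n_U\in\N$ larger than all elements of $A$ gives $\newuparrow n_U\subseteq\overline{W\cap Y}^Z\subseteq\overline U^Z$. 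Finally, for the anti-Urysohn conclusion, given nonempty $U,V\in\tau_Z$ one picks $n_U$ and $n_V$ as above; any $n\in\N$ with $n\ge n_U$ and $n\ge n_V$ then lies in $\overline U^Z\cap\overline V^Z$, so this intersection is nonempty.

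The only genuinely delicate point is the closure computation identifying which $n\in\N$ lie in $\overline{W\cap Y}^Z$: one must separate $K_n$ from the closed set $\overline W^X$ (rather than merely from $W$) in order to obtain an \emph{open} auxiliary neighbourhood $V=X\setminus\overline W^X$, and it is exactly this passage to $\overline W^X$ --- together with the observation that $X\setminus\overline W^X$ misses the nonempty open set $W$ and so cannot be dense --- that lets the hypothesis on dense unions of the $K_n$'s do its work. Everything else is routine bookkeeping with the base $\beta$ and the density of $Y$.
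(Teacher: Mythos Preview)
Your argument is correct and follows the same overall strategy as the paper: reduce to a basic open of the form $W\cap Y$ with $W\in\tau_X$ nonempty, show that all but finitely many $n\in\N$ lie in its $Z$-closure, and deduce this finiteness from the density hypothesis on infinite unions of the $K_n$'s. The one difference worth noting is that the paper works with the simpler \emph{sufficient} condition $K_n\cap W\ne\varnothing$: if $K_n$ meets $W$ then trivially every $\tau_X$-open $V\supseteq K_n$ meets $W$, whence $V\cap W\cap Y\ne\varnothing$ by density of $Y$, so $n\in\overline{W\cap Y}^Z$; and the set $\{n:K_n\cap W=\varnothing\}$ can have no infinite subset since the union of the corresponding $K_n$'s misses the nonempty open set $W$. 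You instead compute the \emph{exact} set of $n$'s in the closure via the condition $K_n\cap\overline W^X\ne\varnothing$, which forces the detour through the auxiliary open set $X\setminus\overline W^X$. This is perfectly correct, and your characterization is sharper, but the passage to $\overline W^X$ that you flag as ``the only genuinely delicate point'' is in fact avoidable: the paper's coarser condition already does the job with one fewer step.
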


\begin{proof}
We refer to \eqref{basis} and note that it clearly suffices to show the statement for an open of the form $U\cap Y\ne\varnothing$ with $U\in\tau_X$. Let $A=\set{ n\in\N\mid U\cap K_n\ne\varnothing}$. Then there must be an $n_U\in\N$ such that $\newuparrow n_U\subseteq A$ (for otherwise, for each $n\in\N$ there is an $m_n\geq n$ with $U\cap K_{m_n}=\varnothing$. The set $\{m_n\mid n\in\N\}$ is clearly infinite and this contradicts the density of $\tbigcup_{n\in\N} K_{m_n}$). Now, fix $n\in A$ and let $W\in\tau_X$ such that $K_n\subseteq W$. Then $W\cap U\ne \varnothing$  and since $Y$ is dense, $Y\cap W\cap U\ne \varnothing$. It follows that $n\in \overline{U}^Z$, as required.
\end{proof}

\begin{example}\label{exm.concrete} (cf. \cite[Thm.~2.2]{juhasz}) Let $\{0,1\}$ denote the two-element discrete space and consider the product topology on $X=\{0,1\}^\N$ (i.e., the Cantor cube). Set 
$$K_n=\{ (a_m)_{m\in\N}\in X\mid a_n=1, a_m=0\ \forall m>n\}.$$
Clearly, $\{K_n\}_{n\in\N}$ is a pairwise disjoint family of nonempty closed (hence compact subsets) of $X$, and it is readily seen that they satisfy the conditions of Proposition~\ref{prop.a.u.}. Hence by Proposition~\ref{prop.a.u.}, the resulting $(Z,\tau_Z)$ is Hausdorff and anti-Urysohn. Actually, because the space $X$ is regular (and in particular strongly Hausdorff), one deduces from Corollary~\ref{cor.str.haus} that $(Z,\tau_Z)$ is strongly Hausdorff.
\end{example}

We conservatively extend the terminology from \cite{juhasz} as follows:

\begin{definition}A locale $L$ will be said to be \emph{anti-Urysohn} if for any $a,b\in L$ with $a,b\ne 0$, one has $a^*\vee b^*\ne 1$. \end{definition}

We recall that a locale is \emph{irreducible} if  $B_L=\{0,1\}$, where $B_L$ denotes the \emph{Booleanization} of $L$ --- i.e., $B_L=\set{ a^*\mid a\in L}=\set{ a\in L\mid a=a^{**}}$ (see \cite{irreducibility.themba}). Before we proceed, we need the following lemma:

\begin{lemma}\label{irreducibility.lemma}
A non-trivial locale with property~\ref{proper.F} cannot be irreducible.
\end{lemma}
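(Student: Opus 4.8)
The plan is to argue by contradiction. I would assume $L$ is non-trivial, satisfies property~\ref{proper.F}, and is irreducible, and extract a contradiction. First I would use non-triviality to pick an element $a\in L$ with $0<a<1$. Then $1\ne a$ and $a\not\le 0$, so $(a,0)$ is an admissible instance of the premise of \ref{proper.F}; applying \ref{proper.F} to it produces $u,v\in L$ with $u\to a\ne a$, $v\to 0\ne 0$, and $u\vee v=1$.

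The key step is to deduce $v=0$ from irreducibility. Since $v\to 0=v^{*}$, the hypothesis $v\to 0\ne 0$ means $v^{*}\ne 0$; but $v^{*}$ lies in $B_{L}=\{0,1\}$ because $L$ is irreducible, so $v^{*}=1$. From \ref{H5} we get $v\wedge v^{*}=v\wedge(v\to 0)=v\wedge 0=0$, hence $v=v\wedge 1=v\wedge v^{*}=0$. Feeding this back into the cover gives $u=u\vee v=1$, so $u\to a=1\to a=a$ by \ref{H1}, which contradicts $u\to a\ne a$. This contradiction establishes the lemma.

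I do not expect any real obstacle; the whole argument is a two-line Heyting computation once irreducibility is rephrased as ``every pseudocomplement is $0$ or $1$''. The only point calling for a little care is the use of non-triviality, which enters exactly once --- to produce an $a$ strictly between $0$ and $1$ so that the premise of \ref{proper.F} can be invoked at all. This hypothesis is genuinely indispensable: the two-element frame is irreducible yet satisfies \ref{proper.F} vacuously, so in this lemma ``non-trivial'' should be read as excluding that degenerate case.
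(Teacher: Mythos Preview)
Your argument is correct and is essentially identical to the paper's own proof: both pick $a\ne 0,1$ by non-triviality, apply \ref{proper.F} to the pair $(a,0)$, use irreducibility to force $v^{*}=1$ (hence $v=0$, $u=1$), and derive the contradiction $u\to a=a$. The only difference is cosmetic --- you spell out the step $v^{*}=1\Rightarrow v=0$ via \ref{H5}, whereas the paper states it directly.
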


\begin{proof}
Let $L$ be a non-trivial irreducible locale with property~\ref{proper.F}. Since $L$ is non-trivial, pick an $a\ne 0,1$ in $L$. By property~\ref{proper.F} there are $u,v\in L$ with $u\vee v=1$,   $u\to a\not\leq a$ and $v^*\ne 0$. By irreducibility, necessarily we have $v^*=1$ --- i.e.,  $v=0$. But then $u=1$, and so $a=u\to a\not\leq a$, a contradiction.
\end{proof}

Finally, the following result explains the failure of  anti-Urysohn locales to have property~\ref{proper.F}.

\begin{proposition}\label{ur.prop.F}
A non-trivial anti-Urysohn locale cannot have property \ref{proper.F}.
\end{proposition}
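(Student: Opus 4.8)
The plan is to argue by contradiction. Suppose $L$ is a non-trivial anti-Urysohn locale satisfying property~\ref{proper.F}; I will produce two nonzero elements whose pseudocomplements join to $1$, contradicting the anti-Urysohn hypothesis. The engine of the argument is the Heyting identity~\ref{H7}, which gives, for all $u,c\in L$,
\[
u\to c^{*}=u\to(c\to 0)=(u\wedge c)\to 0=(u\wedge c)^{*}.
\]
Consequently, if property~\ref{proper.F} is applied --- in the guise of condition~\ref{fof1} of Theorem~\ref{fof} --- to a pair of the form $(a,b)=(c^{*},0)$, then the conclusion $(u\to a)\vee(v\to b)=1$ reads $(u\wedge c)^{*}\vee v^{*}=1$, which is exactly the configuration forbidden by anti-Urysohn, as soon as one checks that $u\wedge c\ne 0$ and $v\ne 0$.

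The first step is therefore to secure an element $c$ with $c^{*}\notin\{0,1\}$, so that $a:=c^{*}$ satisfies $1\ne a\not\leq 0$ and is a legitimate choice of $a$ in condition~\ref{fof1} of Theorem~\ref{fof}. This is precisely where the preceding Lemma~\ref{irreducibility.lemma} enters: a non-trivial locale with property~\ref{proper.F} is not irreducible, so its Booleanization $B_{L}=\set{c^{*}\mid c\in L}$ strictly contains $\{0,1\}$, and any witness $c^{*}$ of this does the job. (Alternatively one could dispense with the lemma and dispatch the case $B_{L}=\{0,1\}$ by repeating the short argument used to prove it; but quoting the lemma is cleaner.)

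With such a $c$ fixed and $a=c^{*}$, condition~\ref{fof1} yields $u,v\in L$ with $u\not\leq a$, $v\not\leq 0$ and $(u\to a)\vee(v\to 0)=1$. Now I unwind each clause: $v\not\leq 0$ gives $v\ne 0$; $u\not\leq a=c^{*}$ gives $u\wedge c\ne 0$, since $u\leq c^{*}\iff u\wedge c=0$; and the displayed identity turns the third clause into $(u\wedge c)^{*}\vee v^{*}=1$. As $u\wedge c$ and $v$ are both nonzero, the anti-Urysohn property of $L$ forces $(u\wedge c)^{*}\vee v^{*}\ne 1$, a contradiction. There is no real obstacle here: once the role of~\ref{H7} and the need for a proper pseudocomplement are identified, the rest is a direct substitution.
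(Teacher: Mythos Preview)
Your proof is correct and follows essentially the same line as the paper's: both invoke Lemma~\ref{irreducibility.lemma} to obtain a nontrivial element of $B_L$, apply property~\ref{proper.F} in the form of Theorem~\ref{fof}\,\ref{fof1}, and then use~\ref{H7} to rewrite the resulting Heyting implications as pseudocomplements, contradicting the anti-Urysohn condition. The only cosmetic difference is that the paper applies~\ref{fof1} to the pair $(a,a^{*})$ rather than your $(c^{*},0)$, so both implications there need the~\ref{H7} rewrite; your choice of $b=0$ is marginally simpler.
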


\begin{proof} Let $L$ be a non-trivial anti-Urysohn locale  and suppose it has property~\ref{proper.F}. By Lemma~\ref{irreducibility.lemma}, it is not irreducible. Hence, select an $a\in B_L$ with $0<a<1$. Since $1\ne a\not\leq a^*$, by an application of property \ref{proper.F} there are $u,v\in L$ with $(u\to a)\vee (v\to a^*)=1$, $u\not\leq a$ and $v\not\leq a^*$. 
Now, one has $u\to a=u\to a^{**}=(u\wedge a^*)^*$ and $v\to a^{*}=(v\wedge a)^*$. Since $u\not\leq a$ and $v\not\leq a^*$, we have $u\wedge a^*\ne 0$ and $v\wedge a\ne 0$, this clearly contradicts the anti-Urysohn property.
\end{proof}

In this situation, from Corollary~\ref{cor.str.haus},  Proposition~\ref{prop.a.u.} and Proposition~\ref{ur.prop.F} we deduce the following.

\begin{proposition}
Let $(Z,\tau_Z)$ be as in Construction~\ref{consj} and suppose that the conditions in Proposition~\ref{prop.a.u.} are satisfied. Assume moreover that $(X,\tau_X)$ is strongly Hausdorff.  Then  $(Z,\tau_Z)$ is strongly Hausdorff and does not satisfy property \ref{proper.F}.
\end{proposition}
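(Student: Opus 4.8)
The plan is to assemble three results already proved above. First I would observe that the hypotheses of Construction~\ref{consj} are met (being strongly Hausdorff, $(X,\tau_X)$ is in particular Hausdorff), so the space $(Z,\tau_Z)$ is well defined, and since $(X,\tau_X)$ is assumed strongly Hausdorff, Corollary~\ref{cor.str.haus} gives immediately that $(Z,\tau_Z)$ is strongly Hausdorff. This settles the first half of the statement with no further work.

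For the second half the route is: $(Z,\tau_Z)$ is anti-Urysohn as a space $\Rightarrow$ $\Omega(Z)$ is anti-Urysohn as a locale $\Rightarrow$ $\Omega(Z)$ does not have property~\ref{proper.F}. Since the conditions of Proposition~\ref{prop.a.u.} are assumed to hold, that proposition tells us that $\overline{U}^Z\cap\overline{V}^Z\ne\varnothing$ for all nonempty $U,V\in\tau_Z$. The only step requiring a moment's thought is the passage to the localic formulation, and it reduces to the elementary identity $U^{*}=Z\setminus\overline{U}^Z$ for an open set $U$ (the pseudocomplement of $U$ in the frame of opens is $\mathsf{int}(Z\setminus U)=Z\setminus\overline{U}^Z$). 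Consequently $U^{*}\vee V^{*}=U^{*}\cup V^{*}=Z\setminus(\overline{U}^Z\cap\overline{V}^Z)$, which is strictly smaller than $Z$ exactly because the two closures meet. Hence for any $a,b\in\Omega(Z)$ with $a,b\ne 0$ one has $a^{*}\vee b^{*}\ne 1$, i.e. $\Omega(Z)$ is an anti-Urysohn locale in the sense of the definition preceding Lemma~\ref{irreducibility.lemma}.

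Finally, $\Omega(Z)$ is non-trivial, since the subspace $\N\subseteq Z$ is infinite and in particular $Z$ has more than one point, so $0\ne 1$ in $\Omega(Z)$. Proposition~\ref{ur.prop.F} then yields that $\Omega(Z)$—equivalently, the topology $\tau_Z$—cannot satisfy property~\ref{proper.F}, which completes the proof. I do not expect any genuine obstacle here: the statement is essentially a corollary of the preceding development, and the only non-bookkeeping point is the spatial-to-localic translation of the anti-Urysohn condition via $U^{*}=Z\setminus\overline{U}$.
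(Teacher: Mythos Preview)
Your proposal is correct and follows exactly the route the paper takes: the proposition is stated as an immediate consequence of Corollary~\ref{cor.str.haus}, Proposition~\ref{prop.a.u.} and Proposition~\ref{ur.prop.F}, and you have simply made the glue explicit (the spatial-to-localic translation of anti-Urysohn via $U^{*}=Z\setminus\overline{U}^Z$, and non-triviality of $\Omega(Z)$). One small caveat: in the paper's usage, ``non-trivial'' in Proposition~\ref{ur.prop.F} effectively requires an element $a\ne 0,1$ (see the proof of Lemma~\ref{irreducibility.lemma}), not merely $0\ne 1$; but since $(Z,\tau_Z)$ is Hausdorff with infinitely many points this is of course satisfied.
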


In particular, with regard to Example~\ref{exm.concrete}, we have the following

\begin{corollary}\label{main.corollary}
There exist strongly Hausdorff spatial locales which do not have property \ref{proper.F}. In particular, they are not $\FF$-separated, and so they are neither fit.
\end{corollary}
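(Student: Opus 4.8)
The plan is to specialize the Proposition immediately preceding this corollary to the concrete space exhibited in Example~\ref{exm.concrete}, and then read off the ``in particular'' clauses from implications already established. First I would recall that the Cantor cube $X=\{0,1\}^{\mathbb N}$, being regular, is in particular strongly Hausdorff, and that the family $\{K_n\}_{n\in\mathbb N}$ of Example~\ref{exm.concrete} has already been checked there to meet the hypotheses of Proposition~\ref{prop.a.u.}. Feeding this data into the preceding Proposition, the space $(Z,\tau_Z)$ produced by Construction~\ref{consj} is strongly Hausdorff and fails property~\ref{proper.F}; since $Z$ is a genuine topological space, $\Omega(Z)$ is a spatial locale, so it is the desired example of a strongly Hausdorff spatial locale without property~\ref{proper.F}.

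For the two remaining assertions I would invoke only the implications recorded earlier in the paper. Since $\FF$-separatedness implies property~\ref{proper.F} by Theorem~\ref{fof}, a locale lacking property~\ref{proper.F} cannot be $\FF$-separated; hence $\Omega(Z)$ is not $\FF$-separated. Likewise, fitness implies $\FF$-separatedness by Theorem~\ref{fitnessimpliesfsep} (equivalently, fitness implies property~\ref{proper.F} by the corollary to Theorem~\ref{fof}), so $\Omega(Z)$ is not fit either.

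Beyond bookkeeping there is essentially no obstacle: the corollary is a direct instantiation. The one step deserving a word --- and it is internal to the preceding Proposition rather than to the corollary --- is the passage from the anti-Urysohn property of the \emph{space} $Z$, as delivered by Proposition~\ref{prop.a.u.}, to the anti-Urysohn property of the \emph{locale} $\Omega(Z)$, which is what lets Proposition~\ref{ur.prop.F} apply. This rests on the identity $\overline U=Z\setminus U^{*}$ valid in any space, whence $\overline U\cap\overline V=Z\setminus(U^{*}\vee V^{*})$, so that ``$\overline U\cap\overline V\neq\varnothing$ for all nonempty opens $U,V$'' is literally ``$U^{*}\vee V^{*}\neq 1$ in $\Omega(Z)$''; i.e.\ the localic anti-Urysohn notion is a conservative extension of the spatial one. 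Should one prefer an argument bypassing the preceding Proposition, the substantive content to reprove would be that $\Omega(Z)$ is strongly Hausdorff --- this is Corollary~\ref{cor.str.haus}, resting on Proposition~\ref{st.haus.sep} and hereditarity of the strong Hausdorff property --- combined with Proposition~\ref{ur.prop.F}; but even then the proof stays a short assembly rather than a computation.
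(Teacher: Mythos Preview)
Your proposal is correct and follows exactly the paper's approach: the corollary is obtained by instantiating the immediately preceding Proposition with the concrete data of Example~\ref{exm.concrete}, and the ``in particular'' clauses are read off from Theorem~\ref{fof} and Theorem~\ref{fitnessimpliesfsep}. Your extra remark about conservativity of the anti-Urysohn notion is accurate and pertains to the proof of the preceding Proposition rather than to the corollary itself.
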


\section{A summary of the implications}\label{summarlocl}

In this final section we summarize our results  in the context of standard localic  $T_1$-type and $T_2$-type separation properties. The following diagram comprises our new separation axioms  together with the usual ones, and the relations between them.

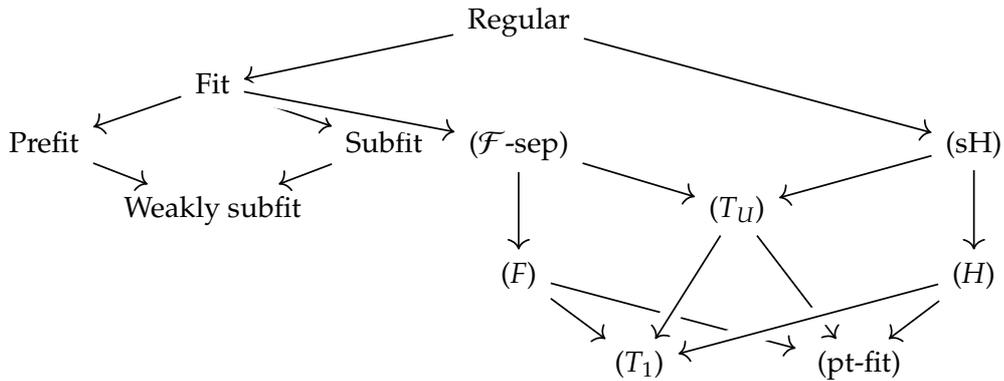
\begin{figure}[h]
\begin{tikzcd}[sep=tiny,arrows={crossing over}]
	&&& {\text{Regular}} \\
	& {\text{Fit}} \\
	{\text{Prefit}} && {\text{Subfit}} & {\text{(}\mathcal{F}\text{-sep)}} &&&& {\text{(sH)}} \\
	& {\text{Weakly subfit}} &&&& {(T_U)}&& \\
	&&& {(F)}& && &{(H)}\\
	 &&&&&&& \\
	 	 &&&&&&& \\
	&&&& \text{(}T_1\text{)}  &&\text{  (pt-fit)}
	\arrow[from=1-4, to=3-8]
	\arrow[from=2-2, to=3-1]
	\arrow[from=2-2, to=3-3]
	\arrow[from=3-1, to=4-2]
	\arrow[from=3-3, to=4-2]
	\arrow[from=1-4, to=2-2]
	\arrow[from=2-2, to=3-4]
	\arrow[from=3-4, to=4-6]
	\arrow[from=3-8, to=4-6]
	\arrow[from=3-4, to=5-4]
	\arrow[from=3-8, to=5-8]
	\arrow[from=4-6, to=8-7]
	\arrow[from=5-4, to=8-7]
	\arrow[from=5-8, to=8-7]
	\arrow[from=4-6, to=8-5]
	\arrow[from=5-4, to=8-5]
	\arrow[from=5-8, to=8-5]
\end{tikzcd}
\caption{Summarizing localic separation}

\label{figtotal}
\end{figure}

All the implications are strict, and the only  implications that hold among the properties above are those which follow from concatenating the implications depicted in the diagram (this follows easily by \cite[Thm.~6.4]{APP21}, Figure~\ref{fig.ptfit}, Subsection~\ref{sec.weakenings}, Remark~\ref{fitness.F} and Corollary~\ref{main.corollary}, together with several facts already known --- see \cite{subfit2015,separation}).

The duality between the strong Hausdorff property and $\FF$-separatedness is apparent in this diagram. Within this parallel, total unorderedness appears to play a symmetric role with respect to both facets of the duality.\\[2mm]
\textbf{Acknowledgement.} I am grateful to Achim Jung, Jorge Picado and Aleš Pultr for useful discussions on the topic discussed in this paper.

\end{document}